\def\bel{\begin{equation*} \begin{aligned}}
\def\eel{\end{aligned} \end{equation*}}
\def\beln{\begin{equation} \begin{aligned}}
\def\eeln{\end{aligned} \end{equation}}
\def\beq{\begin{equation}}
\def\eeq{\end{equation}}
\newcommand{\bea}{\begin{eqnarray}}
\newcommand{\eea}{\end{eqnarray}}
\newtheorem{theorem}{Theorem}[section]
\newtheorem{lemma}[theorem]{Lemma}
\newtheorem{remark}[theorem]{Remark}
\newtheorem{definition}[theorem]{Definition}
\newcommand\fB{\mathfrak{B}}
\newcommand\cO{\mathcal{O}}
\begin{document}

\centerline{\bf \Large Couette Taylor instabilities in the small-gap regime}

\bigskip

\centerline{D. Bian\footnote{School of Mathematics and Statistics, Beijing Institute of Technology, Beijing 100081, China. Email: biandongfen@bit.edu.cn and emmanuelgrenier@bit.edu.cn}, 
E. Grenier$^{1}$,
G. Iooss\footnote{Laboratoire J.A.Dieudonné, I.U.F., Université Côte d’Azur, Parc Valrose, 06108 Nice Cedex 02, France},
Z. Yang\footnote{Department of Mathematics, The Ohio State University, Columbus, OH 43210, USA. Email: yang.8242@osu.edu}
}


\subsubsection*{Abstract}

The Couette-Taylor instability occurs in  a viscous fluid confined between two coaxial rotating cylinders. When the Taylor number surpasses a critical value, the stable Couette flow destabilizes, giving way to steady Taylor vortices. As the Taylor number increases further, these vortices themselves become unstable, transitioning into wavy Taylor vortices.

In this article, we focus on the small-gap limit, where the ratio of the cylinder radii approaches unity and the rotation rates of the cylinders are nearly identical. We provide a rigorous proof of the existence of a critical Taylor number $T_c$, at which the Couette flow loses stability.

For Taylor numbers just above $T_c$ —under fixed axial periodicity—the solutions to the limiting Navier-Stokes system are governed by a Ginzburg-Landau-type partial differential equation. Beyond the classical Taylor vortex flow, we demonstrate that a two-parameter family of solutions emerges at criticality for $T>T_c$. This family includes not only wavy vortices but also a variety of other exotic flow patterns, all of which remain steady in the frame rotating at the average angular velocity of the cylinders.


\tableofcontents


\section{Introduction}


In this article, we study the classical problem of the flow of a viscous fluid between two rotating cylinders,
motivated in particular by the works of M. Nagata \cite{Nagata86}, \cite{Nagata23}, \cite{Nagata24}.
More precisely, we consider the Navier-Stokes equations
\begin{equation} \label{NS1}
\partial_t u + (u \cdot \nabla) u - \nu \Delta u + \nabla p = 0,
\end{equation}
\begin{equation} \label{NS2}
\nabla \cdot u = 0
\end{equation}
in the domain 
$$
\Omega = \Bigl\{ (r,\varphi,z) \quad  | \quad 
r_i < r < r_o \Bigr\}.
$$
In these equations, $u(t,x,y,z)$ denotes the velocity of the fluid, $p(t,x,y,z)$ the pressure,
and $\nu > 0$ the viscosity. Moreover, $(r,\varphi,z)$ are the cylindrical coordinates.
The domain $\Omega$ is the interval between two cylinders with the same axis at $r = 0$,
the inner one being of radius $r_i$ and the outer one being of radius $r_o$.

The inner cylinder (respectively the outer) rotates with an angular velocity
$\omega_i$ (respectively, $\omega_o$). We assume that the fluid ``sticks" to the boundary, namely that the velocity of the fluid equals the velocity
of the cylinders at $r = r_i$ and $r = r_o$.

Let us define the gap $d$, the radii ratio $\eta$ and the ratio of rotation rates $\mu$ by
$$
d = r_o - r_i, \quad \eta = \frac{r_i}{r_o}, \quad \mu = \frac{\omega_o}{\omega_i}.
$$
We work in a rotating frame, with a rotation rate $\Omega_{rf}$ defined by
$$
\Omega_{rf} = \frac{\omega_i + \omega_o}{2} =  \omega_i \frac{1+\mu}{2}.
$$
We consider the stability and bifurcations of the Couette flow, defined by
\begin{equation} \label{Couette}
U(r) = r \, \Omega_{rot} (r) = (A - \Omega_{rf})r + \frac{B}{r},
\end{equation}
in this rotating frame, where $A$ and $B$ are constants such that the velocity of the fluid coincides with 
the velocity of the cylinders at the inner and outer cylinders
$$
A = \omega_i \frac{\mu - \eta^2}{1 - \eta^2},
\quad 
B = \omega_i r_i^2 \frac{1 - \mu}{1 - \eta^2}.
$$
We focus on the small gap / slow rotation / high Reynolds regime where
\begin{equation}\label{limit}
    \eta \to 1, \qquad \mu \to 1,
\qquad
\hat{\omega}=\omega_i \frac{d^2}{\nu}\to 0, \qquad
{\mathfrak R} = \frac{ \hat \omega (1 - \mu)}{1 - \eta}
\to \infty ,
\end{equation}
and we define the Taylor number to be
$$
T = 2 \hat{\omega} \mathfrak{R}.
$$
We refer to section \ref{sec2} for the adimensionalization of the Navier-Stokes equations.

We first consider axi-symmetric perturbations and prove the existence of a critical Taylor number $T_c$.

\begin{theorem}
For $\eta$ and $\mu$ close enough to $1$, there exists $T_c > 0$ such that the Couette flow (\ref{Couette}) is linearly stable
with respect to axisymmetric perturbations if $T < T_c$ and linearly unstable if $T > T_c$.
\end{theorem}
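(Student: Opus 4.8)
The plan is to reduce the linear stability question to a one-dimensional spectral problem on the rescaled gap variable, to pass to the small-gap limit to obtain a constant-coefficient operator, and to exploit its self-adjointness to characterize $T_c$ variationally; the remaining work is to transfer the conclusion back to the genuine variable-coefficient problem for $\eta,\mu$ close to $1$.

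First I would linearize \eqref{NS1}--\eqref{NS2} about the Couette flow \eqref{Couette} and restrict to axisymmetric perturbations, i.e.\ velocity and pressure independent of $\varphi$. Writing the perturbation as a superposition of axial modes $e^{ikz}$ and seeking solutions $\propto e^{\sigma t}$, incompressibility together with elimination of the pressure collapses the system to a coupled pair of ODEs, in the rescaled radial variable $x=(r-r_i)/d\in(0,1)$, for the amplitudes $u$ (radial) and $v$ (azimuthal), of total order six, with growth rate $\sigma$ as eigenvalue and $k$ as parameter. Inserting the scaling regime \eqref{limit}, the base rotation profile $\Omega_{rot}(r)$ becomes affine and all curvature coefficients converge; in particular the coupling weight tends to a constant as $\mu,\eta\to1$, so the system tends to a limiting \emph{constant-coefficient} operator $L_k(T)$ of the schematic form $(D^2-k^2-\sigma)(D^2-k^2)u = k^2 T\,v$, $(D^2-k^2-\sigma)v=u$, with $D=d/dx$ and no-slip conditions $u=Du=v=0$ at $x=0,1$.

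Next I would establish the principle of exchange of stabilities for $L_k(T)$: the relevant eigenvalue $\sigma$ stays real, so instability first occurs through $\sigma=0$. In the limit this follows because the marginal operator is, after a standard symmetrization, self-adjoint and positive with compact resolvent. Setting $\sigma=0$ and eliminating $v$ reduces the marginal condition to the scalar sixth-order eigenvalue problem
\[
(k^2-D^2)^3\,u = k^2\,T\,u, \qquad u=Du=(D^2-k^2)^2u=0 \ \text{ at } x=0,1,
\]
whose operator is positive self-adjoint; hence its spectrum is a discrete increasing sequence of positive eigenvalues, and the smallest value $T=T(k)>0$ admitting a nontrivial solution is characterized by the Rayleigh quotient $k^2\,T(k)=\min_u \|(k^2-D^2)^{3/2}u\|^2/\|u\|^2$ over admissible $u$, depending continuously on $k$.

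I would then set $T_c=\min_{k>0}T(k)$ and show the minimum is attained at some $k_c>0$ and is strictly positive: as $k\to0^+$ the principal eigenvalue of $(k^2-D^2)^3$ stays bounded below while the prefactor $k^2\to0$, forcing $T(k)\to+\infty$, and as $k\to\infty$ the eigenvalue grows like $k^6$ so $T(k)\sim k^4\to+\infty$. Monotonicity of the principal eigenvalue in $T$ then yields linear stability of every axial mode for $T<T_c$ and instability of the mode at $k_c$ for $T>T_c$. The main obstacle I anticipate is twofold: first, proving the exchange of stabilities cleanly (ruling out any complex eigenvalue crossing the imaginary axis before $\sigma=0$), which I expect to control through the self-adjoint structure of the limiting operator and an energy estimate; and second, transferring these stability and instability conclusions from $L_k(T)$ back to the true variable-coefficient operator uniformly in $k$, so that the existence of $T_c$ persists for $\eta,\mu$ merely close to $1$. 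I plan to treat the latter by operator-norm convergence and an analytic-perturbation argument bounding the resolvent of the genuine operator by that of $L_k(T)$ on compact $k$-intervals around $k_c$, handling small and large $k$ separately to exclude spurious instabilities there.
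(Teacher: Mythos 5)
Your proposal follows the same skeleton as the paper (reduce to a one-dimensional spectral problem in the gap variable, pass to the constant-coefficient small-gap limit, use self-adjointness to define $T_c(\alpha)$ for each axial wavenumber, minimize over the wavenumber, and recover $\eta,\mu$ near $1$ by a relative-perturbation argument), but the pivotal step is wrong as stated. The scalar sixth-order operator $(k^2-D^2)^3$ with boundary conditions $u=Du=(D^2-k^2)^2u=0$ is \emph{not} self-adjoint in $L^2$. Indeed, for $u,v$ satisfying these conditions, six integrations by parts leave
\begin{equation*}
\int_0^1 \Bigl[(k^2-D^2)^3u\Bigr]\bar v - u\,(k^2-D^2)^3\bar v \,dx
=\Bigl[\,u''\bar v'''-u'''\bar v''\,\Bigr]_{0}^{1},
\end{equation*}
which does not vanish: the third boundary condition only pins $D^4u$ (it reads $D^4u=2k^2D^2u$ once $u=0$), leaving $u''$ and $u'''$ free at the endpoints. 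Consequently $\langle (k^2-D^2)^3u,u\rangle$ is not a nonnegative form, and the claimed Rayleigh quotient $k^2T(k)=\min_u\|(k^2-D^2)^{3/2}u\|^2/\|u\|^2$ is not available. Worse, taken at face value (interpreting the fractional power through the Dirichlet realization of $k^2-D^2$), that quotient equals $(k^2+\pi^2)^3$, i.e. it reproduces the classical \emph{free-boundary} threshold $\min_k (k^2+\pi^2)^3/k^2=27\pi^4/4\approx 658$, not the rigid-boundary value $\approx 1708$ that this problem must yield. So the formula is not merely unjustified; it produces the wrong critical number, and with it your exchange-of-stabilities and discreteness claims for the scalar problem lose their foundation.

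The symmetry that actually makes the argument work lives in the \emph{system}, not in the scalar reduction, and this is exactly what the paper's Lemma \ref{lem3.1} provides: the operator $\mathcal{L}_{1,\alpha}$ acting on $(u_x,\widehat u_y,u_z)$ is self-adjoint for the weighted inner product $\int (u_x\bar v_x+T\widehat u_y\overline{\widehat v_y}+u_z\bar v_z)\,dx$, where the weight $T$ on the $\widehat u_y$ component symmetrizes the coupling terms and the divergence-free condition kills the pressure contributions. Equivalently, the marginal problem is the \emph{generalized} symmetric eigenvalue problem $(k^2-D^2)^2_{\mathrm{clamped}}\,u=k^2T\,(k^2-D^2)^{-1}_{\mathrm{Dirichlet}}u$, whose Rayleigh quotient has the nontrivial denominator $\langle (k^2-D^2)^{-1}_{\mathrm{Dirichlet}}u,u\rangle$ rather than $\|u\|^2$. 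With that repair, the rest of your outline (reality and discreteness of the spectrum, positivity of $T(k)$, the limits $T(k)\to\infty$ as $k\to0$ and $k\to\infty$, and the transfer to $\eta,\mu$ close to $1$, which matches the paper's relatively-compact-perturbation argument) goes through. Note finally that the paper does not stop at an abstract variational characterization: it solves the constant-coefficient marginal ODE explicitly and characterizes $T_c(\alpha)$ as the smallest positive zero of the explicit functions $\mathfrak{F}_1,\mathfrak{F}_2$ of Lemma \ref{lem3.2}, which is what permits the numerical identification $T_c\approx 1708$, $\alpha_c\approx 3.117$; your route, once corrected, yields existence of $T_c$ but not these quantitative conclusions.
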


Let $\alpha$ be the Fourier wave number associated with the axial coordinate $z$. Using the method of oscillating kernels, 
without taking our asymptotics,
V. Yudovich \cite{Yudo1}  proved that for every $\alpha \ne $0, there exists a critical Taylor number
$T_c(\alpha)>0$. 
 This number defines the stability threshold for the Couette flow with respect to axisymmetric perturbations:
the flow is stable for $T < T_c(\alpha)$ and unstable for $T > T_c(\alpha)$.
Furthermore, V. Yudovich proved that  the function $T_c(\alpha)$ is even, analytic, and goes to infinity as
$\alpha$ approaches to $0$ or to $\pm \infty$. Consequently, the critical Taylor number
$$
T_c = \min_{\alpha \in \mathbb{R}} T_c(\alpha)>0
$$
is well defined. However, it remains an open question whether this minimum is achieved at only two critical wavenumbers, $\pm \alpha_c$, or if additional values of $\alpha$ may also yield the minimum.

Thanks to our triple limit $\eta \to 1$, $\mu \to 1$, $\hat{\omega} \to 0$,
the analysis of linear stability is significantly simplified. In this regime, the theory of oscillating kernels is no longer required, and the problem reduces to determining whether a certain self-adjoint operator admits a positive eigenvalue. Specifically, it suffices to investigate whether $0$ is an eigenvalue of a $6 \times 6$ system of ordinary differential equations, whose solutions can be computed explicitly. For a fixed $\alpha$, $T_c(\alpha)$ is characterized as the smallest positive root of an explicit hyperbolic-trigonometric function, ensuring its existence (see Lemma \ref{lem3.2}). Numerically, it is straightforward to tabulate $T_c(\alpha)$ as the zeros of this explicit function. Our calculations reveal that the global minimum $T_c \approx 1708$
is achieved at only two wavenumbers, $\alpha = \pm \alpha_c \approx \pm 3.117$, in agreement with classical results  \cite{Drazin}.

\medskip

We then turn to non axi-symmetric perturbations. 
Let $\beta$ denote the azimuthal wave number.

\begin{theorem}
For fixed axial wavelength, if $|\beta \mathfrak{R}|$ small enough, then the set of eigenvalues of the linearized operator near the Couette flow is composed of a denumerable set of \emph{real intervals}. The largest eigenvalue $\lambda$ is double. Moreover, for $T>T_c$, $\lambda$ crosses $0$ together with an interval of real eigenvalues of length $T-T_c-b(\beta \mathfrak{R})^2$ ($b>0$).    
\end{theorem}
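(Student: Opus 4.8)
The plan is to reduce the linearized problem to a one–parameter family of ordinary differential operators across the gap and then treat the non‑self‑adjoint part perturbatively in the single small quantity $\beta\mathfrak{R}$. Writing perturbations as $u=\hat u(r)\,e^{i(\alpha z+\beta\varphi)+\lambda t}$ and eliminating the pressure through incompressibility, the linearization about the Couette profile \eqref{Couette} becomes, in the small‑gap scaling of Section~\ref{sec2}, an ODE eigenvalue problem $L_{\alpha,\beta}\hat u=\lambda\hat u$ extending the self‑adjoint $6\times6$ system of the axisymmetric analysis. The key structural point is that the only genuinely non‑self‑adjoint contribution is the azimuthal advection by the base flow, namely multiplication by $i\beta\,(\Omega_{rot}(r)-\Omega_{rf})$, which carries the factor $\beta\mathfrak{R}$. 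I would therefore expand, schematically,
\[
L_{\alpha,\beta}=S_\alpha+i(\beta\mathfrak{R})A_\alpha+(\beta\mathfrak{R})^2R_\alpha+\cdots,
\]
where $S_\alpha$ is exactly the self‑adjoint operator of the axisymmetric case, $A_\alpha$ is self‑adjoint (so $iA_\alpha$ is skew‑adjoint), and $R_\alpha$ collects the self‑adjoint lower‑order remainder. By the axisymmetric result established above, at $T=T_c$ the top eigenvalue $\sigma_0$ of $S_{\alpha_c}$ equals $0$ and is \emph{double}, its eigenspace spanned by the two marginal modes at $\pm\alpha_c$ (equivalently the real and imaginary parts of the critical roll); for $T$ near $T_c$ it behaves like $g\,(T-T_c(\alpha))$ with $T_c(\alpha)$ minimized at $\pm\alpha_c$.

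Second, I would pin down reality through an antilinear symmetry. Complex conjugation composed with the reflection $(\varphi,z)\mapsto(-\varphi,-z)$ satisfies $\mathcal{T}L_{\alpha,\beta}\mathcal{T}^{-1}=L_{\alpha,\beta}$ with $\mathcal{T}$ antilinear, so $L_{\alpha,\beta}\psi=\lambda\psi$ forces $L_{\alpha,\beta}(\mathcal{T}\psi)=\bar\lambda\,(\mathcal{T}\psi)$: the whole spectrum is invariant under $\lambda\mapsto\bar\lambda$. Working in the frame rotating at the \emph{mean} angular velocity $\Omega_{rf}$ is precisely what makes this $PT$‑type symmetry compatible with the advection term (there is no uniform imaginary Doppler displacement to spoil conjugation symmetry). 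Since at $\beta\mathfrak{R}=0$ the relevant eigenvalues are real and $\mathcal{T}$‑invariant, the symmetry stays unbroken in a neighborhood and the perturbed eigenvalues near the top remain real for $|\beta\mathfrak{R}|$ small.

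Third comes the quantitative behaviour of the double top eigenvalue, which I expect to be the main computation. In the small‑gap limit the advection multiplier $\Omega_{rot}(r)-\Omega_{rf}$ is, to leading order, an \emph{odd} function of the mid‑gap coordinate $\xi=r-\bar r$, while the marginal eigenfunctions have definite parity in $\xi$. Hence the diagonal matrix elements $\langle\psi_0,A_\alpha\psi_0\rangle$ vanish and there is \emph{no} first‑order shift; the leading correction is second order. Because $iA_\alpha$ is skew‑adjoint and $\sigma_0$ sits at the top of the spectrum of $S_\alpha$, second‑order perturbation theory gives
\[
\lambda=-\,(\beta\mathfrak{R})^2\!\!\sum_{\sigma_n<\sigma_0}\frac{|\langle\psi_0,A_\alpha\psi_n\rangle|^2}{\sigma_0-\sigma_n}+(\beta\mathfrak{R})^2\langle\psi_0,R_\alpha\psi_0\rangle+\cdots,
\]
which is real, and whose first (skew) sum is strictly negative since every denominator $\sigma_0-\sigma_n$ is positive. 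After checking that this term controls the explicit remainder $\langle\psi_0,R_\alpha\psi_0\rangle$ one obtains a shift $-b(\beta\mathfrak{R})^2$ with $b>0$; the parity symmetry also makes the reduced $2\times2$ block scalar, so the eigenvalue stays double.

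Finally, the band (interval) structure: for the infinite cylinder the axial wavenumber $\alpha$ is a continuous parameter, so $L_{\alpha,\beta}$ decomposes as a direct integral of regular fiber operators, and the spectrum is the union over $\alpha\in\mathbb{R}$ of the (real, by the above) fiber eigenvalues, the fixed azimuthal number $\beta$ entering only through $\beta\mathfrak{R}$. Each fiber eigenvalue branch sweeps out a real interval as $\alpha$ ranges over $\mathbb{R}$, yielding the denumerable family of real intervals; the top branch $g\,(T-T_c(\alpha))-b(\beta\mathfrak{R})^2$ attains its maximum at the two critical wavenumbers $\pm\alpha_c$ (whence the doubleness), so its positive part is an interval of length $T-T_c-b(\beta\mathfrak{R})^2$ after normalizing $g$ to one. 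I expect the main obstacles to be twofold: first, establishing cleanly that the first‑order term vanishes and fixing the sign $b>0$, which requires the mid‑gap parity bookkeeping of the marginal modes together with an honest estimate of the second‑order sum against $R_\alpha$; and second, upgrading reality from this leading‑order computation to the whole continuous spectrum, i.e.\ controlling the non‑normality uniformly across fibers so that the antilinear symmetry stays unbroken and no eigenvalue detaches from the real axis for all $|\beta\mathfrak{R}|$ below a fixed threshold.
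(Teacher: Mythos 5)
Your overall architecture---reality via an antilinear symmetry, cancellation of the first-order term in $\beta\mathfrak{R}$ by mid-gap parity, a second-order shift $-b(\beta\mathfrak{R})^2$ of the double top eigenvalue, and intervals swept by a continuous wavenumber---is the same as the paper's, which derives the theorem from Lemmas \ref{lem4.1} and \ref{lem4.2}. However, two of your central steps are wrong as written.

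First, the antilinear symmetry you propose is not a symmetry of the operator. The only non-normal term of the limit system is the advection $\mathfrak{R}x\,\partial_y$ (in a fiber, multiplication by $i\beta\mathfrak{R}x$). Complex conjugation does commute with the full real operator, but the reflection $(\varphi,z)\mapsto(-\varphi,-z)$ sends $\mathfrak{R}x\,\partial_y$ to $-\mathfrak{R}x\,\partial_y$, so your $\mathcal{T}$ intertwines $L_{\alpha,\beta}$ with $L_{\alpha,-\beta}$ rather than commuting with $L_{\alpha,\beta}$; this gives only $\mathrm{spec}(L_{\alpha,-\beta})=\overline{\mathrm{spec}(L_{\alpha,\beta})}$, which is exactly what plain conjugation already gives and is compatible with complex-conjugate pairs, hence yields no reality. (Azimuthal reflection is not a symmetry of the linearization about Couette flow, before or after the limit, since the base flow is odd under it.) The reflection that works must flip the gap coordinate: $(x,y)\mapsto(-x,-y)$ together with $(u_x,u_y)\mapsto(-u_x,-u_y)$, the paper's symmetry $S_1$, precisely because the advection multiplier $x$ is odd about mid-gap---the fact you invoke only later, for matrix elements. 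Conjugation composed with this reflection fixes each fiber operator, and gives Lemma \ref{lem4.1}: $\lambda(\alpha,\mathfrak{B})=\lambda(\alpha,-\mathfrak{B})=\overline{\lambda}(\alpha,\mathfrak{B})$ for every branch, hence reality of all the intervals and not merely of eigenvalues near the top, with no need for an ``unbroken symmetry'' continuation argument.

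Second, and more fundamentally, your analytic proof that $b>0$ rests on a false structural premise, and this is exactly the point where the paper resorts to numerics. You posit $L_{\alpha,\beta}=S_\alpha+i(\beta\mathfrak{R})A_\alpha+(\beta\mathfrak{R})^2R_\alpha+\cdots$ with $A_\alpha$ self-adjoint on a fixed space, so that the skew part depresses the top eigenvalue at second order. But the divergence constraint $Du_x+i\alpha u_z+i\mathfrak{B}\widehat{u}_y=0$ is itself $\mathfrak{B}$-dependent, and once the pressure and $u_z$ are eliminated the problem is the generalized eigenvalue problem (\ref{eigensystem}), $\lambda(H_0+\cdots)U=(L_0+\mathfrak{B}L_{100}+\cdots)U$, with mass operator $H_0U=((\alpha_c^2-D^2)u_x,u_y)^t$ and $L_{100}=ixH_0$. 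Converted to a standard eigenvalue problem, the $O(\mathfrak{B})$ term is $iH_0^{-1}xH_0$, whose symmetric part in the relevant inner product is $\tfrac{i}{2}H_0^{-1}[x,H_0]\neq 0$: the perturbation is not skew. Accordingly the second-order coefficient, $a_4\langle H_0\zeta,\zeta\rangle=\langle L_{100}U^{(100)},\zeta\rangle$ with $L_0U^{(100)}=-L_{100}\zeta$, reduces to a pairing of the form $\langle L_0^{-1}(xH_0\zeta),H_0(x\zeta)\rangle$ between two \emph{different} vectors (they differ by $[H_0,x]\zeta\neq0$); it is not a sign-definite quadratic form $\langle L_0^{-1}\phi,\phi\rangle$, so positivity of the spectral gaps of $L_0$ determines no sign. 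This is why the paper proves $a_3>0$ analytically but obtains $a_4<0$ (your $b>0$) only numerically (Lemma \ref{lem4.2} and Appendix \ref{Appendix22}); a rigorous version of your sign argument would improve on the paper, but you give no estimate of the symmetric part, which is where the whole difficulty lies. A smaller inconsistency: you generate the intervals by letting $\alpha$ range over $\mathbb{R}$, contradicting the fixed-axial-wavelength hypothesis; in the paper $\alpha$ is discrete and the continuum (and the non-isolation of the eigenvalue $0$ at criticality) comes from the azimuthal direction $y\in\mathbb{R}$, through $\mathfrak{B}=\beta\mathfrak{R}$.
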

The theorem follows from Lemmas \ref{lem4.1} and \ref{lem4.2} and is based on the detailed analysis
of the eigenvalue $\lambda_0$ with the largest real part.
We prove that $\lambda_0$ is real, and that 
for $T$ close to $T_c$, $\alpha$ close to $\alpha_c$, and $\beta \mathfrak{R}$ close to $0$,
$$
\lambda_0 =  a_3 (T - T_c) + a_4 (\beta \mathfrak{R})^2 + a_6 (\alpha^2 - \alpha_c^2)^2 +  \cdots .
$$
where $a_3 > 0$, $a_4 <0 $ (proved numerically) and $a_6 <0$.

\begin{remark}
    It is a remarkable fact that all eigenvalues are real, despite the non-self-adjointness of the linearized operator. This is due to an additional symmetry with respect to the $z$ axis that commutes with the system (see Lemma \ref{lem4.1}). This is specific to our limit case. Moreover, at criticality the eigenvalue $0$ is not isolated, which prevents us from using classical reduction methods for bifurcation problems.
\end{remark}

As $\eta \to 1$, the azimuthal period tends to infinity, so the azimuthal angle $\varphi$ 
may be treated as a real variable $y \in \mathbb{R}$ (see section $2$ for the detailed computations).
In this limit, the initial Navier-Stokes system turns into a new system (\ref{newnonlinear}) that we
call the limiting Navier-Stokes system.

For $T> T_c$ with $T$ close to $T_c$, the Couette flow is unstable with respect to perturbations 
with small azimuthal wave number $\beta$. 
For the limiting Navier–Stokes system (\ref{newnonlinear}), 
the dynamics of this nonlinear instability can be formally described by a
Ginzburg-Landau equation, written on an amplitude $A(t,y)$, which satisfies
\begin{equation} \label{GLequ1}
\frac{\partial  A}{\partial t}=a_{3}(T - T_c) A- a_{4}\mathfrak{R}^2 \frac{\partial^2 A}{\partial y^2} -cA|A|^2,
\end{equation}
where $c$ is a positive real number.
We do not attempt to justify (\ref{GLequ1}) here. Instead, we focus on the time independent Ginzburg-Landau equation
\begin{equation} \label{GLequind}
a_{3}(T-T_c) A- a_{4}\mathfrak{R}^2 \frac{\partial^2 A}{\partial y^2} -cA|A|^2 = 0.
\end{equation}
Using the methods of spatial dynamics, in particular the approach of \cite{Iooss2}, it can be proved
that small-amplitude stationary solutions of the limiting Navier-Stokes system (\ref{newnonlinear}) are,
at leading order, described by small solutions of (\ref{GLequind}). The argument in \cite{Iooss2}, developed for
a nearby problem, carries over to our current problem with only minor and simple modifications. 
To avoid reproducing a lengthy classical proof,
we omit this justification and focus instead on the analysis of (\ref{GLequind}).
The next Theorem describes the small-amplitude solutions of (\ref{GLequind}).

\begin{theorem}\label{maintheo}
For $T>T_c$, in addition to the constant solutions, 
a one parameter family of solutions of (\ref{GLequind}) bifurcates supercritically. For these solutions, the amplitude $A$ has a constant modulus $|A|$ (independent of $y$) and a phase that is linear in $y$. 

Moreover, a two-parameters family of exotic solutions  of (\ref{GLequind}) bifurcates, with a modulus 
$|A|$ which oscillates periodically in $y$, and a phase that is the sum of an oscillating part 
and a linear part (see Figures \ref{diagram} and \ref{rhotheta}).
\end{theorem}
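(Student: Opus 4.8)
The plan is to treat the stationary equation (\ref{GLequind}) as a completely integrable planar Hamiltonian system and to read off all small solutions from its phase portrait. Since $T>T_c$ we have $\kappa:=a_3(T-T_c)>0$, and since $a_4<0$ the coefficient $\delta:=-a_4\mathfrak{R}^2$ is positive, so (\ref{GLequind}) reads $\delta A''=cA|A|^2-\kappa A$, with $A$ complex and $c>0$. I would first remove all parameters by rescaling: setting $A=\sqrt{\kappa/c}\,B$ and $y=\sqrt{\delta/\kappa}\,\xi$ turns the equation into the universal form
\[
B''=B\bigl(|B|^2-1\bigr).
\]
This already encodes the supercritical scaling: every nonzero solution has amplitude proportional to $\sqrt{\kappa/c}\propto\sqrt{T-T_c}$ and lives on a spatial scale $\sqrt{\delta/\kappa}$, so the whole solution set bifurcates from $A\equiv0$ at $T=T_c$, with the sign $c>0$ making the bifurcation supercritical.

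Next I would pass to polar coordinates $B=r\,e^{i\psi}$. Splitting $B''=B(|B|^2-1)$ into real and imaginary parts gives, from the imaginary part, $(r^2\psi')'=0$, hence the first integral
\[
r^2\psi'=j\qquad(\text{constant}),
\]
and, from the real part, the Newtonian equation $r''=r^3-r+j^2/r^3=-U'(r)$ with effective potential
\[
U(r)=-\tfrac14 r^4+\tfrac12 r^2+\frac{j^2}{2r^2}.
\]
It therefore admits the energy first integral $\tfrac12(r')^2+U(r)=e$. The whole problem thus reduces to the one-dimensional potential $U$ for each value of $j$, together with the reconstruction $\psi(\xi)=\int_0^\xi j\,r(s)^{-2}\,ds$ of the phase.

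I would then classify the orbits. The constant solutions are the trivial state $r\equiv0$ and, for $j=0$, the Taylor-vortex state $r\equiv1$ (that is, $|A|^2=\kappa/c$). For $j\neq0$ the relative equilibria $r'=r''=0$ solve $s^3-s^2+j^2=0$ with $s=r^2$; a short computation ($f(s)=s^3-s^2+j^2$ has its minimum at $s=\tfrac23$ with $f(\tfrac23)=j^2-\tfrac{4}{27}$) shows this cubic has exactly two positive roots $s_1<s_2$ whenever $0<j^2<\tfrac{4}{27}$, the smaller giving a nondegenerate minimum of $U$ (a center) and the larger a maximum (a saddle). These constant-$r$ states are exactly the plane waves $B=r_0 e^{ik\xi}$ with $r_0^2=1-k^2$ and $j=kr_0^2$; in the original variables $A=\rho_0 e^{iky}$ with $\rho_0^2=(\kappa-\delta k^2)/c$, and letting $k$ vary produces the announced one-parameter family of constant-modulus, linearly-phased solutions.

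For the exotic solutions I would exploit the center $r\equiv r_1(j)=\sqrt{s_1}$: since $U''(r_1)>0$, a full neighbourhood of $(r_1,0)$ in the $(r,r')$ plane is foliated by closed orbits, one for each energy $e\in\bigl(U(r_1),U(r_2)\bigr)$, up to the homoclinic loop through the saddle. On each such orbit $r(\xi)$ is periodic and nonconstant, producing the oscillating modulus $|A|$; the two free parameters are $j$ and $e$ (equivalently $j$ and the oscillation amplitude). For the phase, $r^{-2}$ is periodic with the period $P$ of $r$, so separating its mean gives $\psi(\xi)=j\langle r^{-2}\rangle\,\xi+\psi_{\mathrm{per}}(\xi)$, a linear part plus an oscillating part, exactly as claimed and as displayed in the phase portrait. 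The main obstacle is the rigorous content of this last step: showing that the nondegenerate well is genuinely foliated by a one-parameter family of closed orbits up to the first critical energy, that together with $j$ this yields a true two-parameter family of distinct (nonconstant-$r$) solutions, and that the period map stays finite so the family is nondegenerate and small-amplitude. Once $U''(r_1(j))>0$ is checked, this is the standard theory of periodic orbits in a potential well; the remaining bookkeeping—the $\sqrt{T-T_c}$ amplitude scaling and the linear-plus-oscillating decomposition of $\psi$—is then routine.
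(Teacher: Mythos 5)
Your proposal is correct and follows essentially the same route as the paper: polar decomposition $A=\rho e^{i\theta}$, the angular-momentum integral $\rho^2\theta'=K$ (your $j$), the energy integral reducing the problem to a quadrature, and classification of the orbits of the resulting one-degree-of-freedom system — your effective-potential picture in $(r,r')$ with parameters $(j,e)$ is exactly the paper's analysis of $f(X)=-K^2/X+X^2-\tau X+H$ in the $(H,K)$ plane, and your cubic $s^3-s^2+j^2$ with the threshold $j^2<4/27$ reproduces the paper's boundary curve $(H,K)_{\max}=(\tau^2/3,(\tau/3)^{3/2})$ after its normalization. The only elements of the paper's discussion you omit (the azimuthal eligibility condition and the persistence of these solutions under higher-order terms via spatial dynamics) lie outside the statement being proved.
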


In the theorem above, the constant solution corresponds to the steady axi-symmetric Taylor vortex flow. The one-parameter family
of solutions corresponds to three dimensional wavy vortices, which are steady in the rotating frame.
The two-parameters family corresponds to exotic solutions of the limiting Navier-Stokes equations (\ref{newnonlinear}),
whose amplitude oscillates periodically in $y$.


\section{The Navier-Stokes equations in cylindrical coordinates}\label{sec2}


In this section, we adimensionalize the Navier-Stokes equations in cylindrical coordinates.
In cylindrical coordinates, a (not necessarily axi-symmetric) perturbation $(u_r, u_\varphi, u_z, p)$ of the Couette flow \eqref{Couette} satisfies the system (see \cite{Nagata23})
\begin{align*}
   \Big( \frac{\partial}{\partial t} - \nu (\Delta_{cyl} - &\frac{1}{r^2}) \Big) u_r 
   + \nu \frac{2}{r^2} \frac{\partial u_\varphi}{\partial \varphi} 
   + \frac{\partial p}{\partial r} 
   \\
   &= - \Omega_{rot} \frac{\partial u_r}{\partial \varphi} + 2\Omega_{rot} u_\varphi  
   + \frac{u_\varphi^2}{r} + 2\Omega_{rf} u_\varphi - (u \cdot \nabla) u_r,
   \\
   \Big( \frac{\partial}{\partial t} - \nu (\Delta_{cyl} &- \frac{1}{r^2}) \Big) u_\varphi 
   - \nu \frac{2}{r^2} \frac{\partial u_r}{\partial \varphi} + \frac{1}{r} \frac{\partial p}{\partial \varphi} 
   \\
   &= - \Omega_{rot} \frac{\partial u_\varphi}{\partial \varphi} { - U' u_r - \Omega_{rot} u_r} 
   - \frac{u_r u_\varphi}{r} - 2\Omega_{rf} u_r  - (u \cdot \nabla) u_\varphi,
   \\
\Big( \frac{\partial}{\partial t} - &\nu \Delta_{cyl}  \Big) u_z 
+ \frac{\partial p}{\partial z} 
= - \Omega_{rot} \frac{\partial u_z}{\partial \varphi}  - (u \cdot \nabla) u_z,
\end{align*}
together with the incompressibility condition, which in
cylindrical coordinates reads
$$
\frac{\partial u_r}{\partial r} + \frac{u_r}{r} + \frac{1}{r} \frac{\partial u_\varphi}{\partial \varphi} 
+ \frac{\partial u_z}{\partial z} = 0.
$$
Here the Coriolis term is added on the right-hand side,
and $\Delta_{cyl}$ is the Laplace operator in
cylindrical coordinates, namely
$$
\Delta_{cyl} = \frac{\partial^2}{\partial r^2} + \frac{1}{r} \frac{\partial}{\partial r} 
+ \frac{1}{r^2} \frac{\partial^2}{\partial \varphi^2} + \frac{\partial^2}{\partial z^2}.
$$
We perform the change of variables
$$
\hat{x} = \frac{r - \bar{r}}{d}, \quad \hat{y} = \frac{\bar{r}}{d} \varphi, 
\quad \hat{z} = \frac{z}{d}, \quad \hat{t} =  \frac{\nu}{d^2} t ,
$$
where $\bar{r}$ is the average radius
$$
\bar{r} = \frac{r_o + r_i}{2} = \frac{1+ \eta}{2} r_o.
$$
We rescale the velocity and the pressure by
\begin{align*}
    u(t,r,\varphi,z) = \frac{\nu}{d} \hat{u}(\hat{t}, \hat{x}, \hat{y},\hat{z}) ,\\
    p(t,r,\varphi,z) =\frac{\nu^2}{d^2} \hat{p}(\hat{t}, \hat{x}, \hat{y},\hat{z}) .
\end{align*}
Using the identities obtained in Appendix \ref{App1}, we have 
\begin{align*}
   \Big( \frac{\partial}{\partial \hat{t}} - \hat\Delta \Big) \hat{u}_{\hat{x}} 
   &+ \frac{\partial \hat p}{\partial \hat{x}} 
   = \mathfrak{R}\hat{x} \frac{\partial \hat{u}_{\hat{x}}}{\partial \hat{y}} 
   + \hat{\omega}(1 + \mu) \hat{u}_{\hat{y}} - (\hat{u} \cdot \nabla) \hat{u}_{\hat{x}} 
   +\cO \Bigl(\hat{\omega} (1 - \mu)\frac{\partial \hat{u}_{\hat{x}}}{\partial \hat{y}} \Bigr)
  \\
   &+ \cO(1 - \eta)^2 \hat{u}_{\hat{x}} - 2\Big(\hat{\omega} (1 - \mu)  \hat{x} + \cO(1 - \eta) \Big)\hat{u}_{\hat{y}} 
   +  \cO(1 - \eta) \Big( \hat{u}_{\hat{y}}^2 + \frac{\partial \hat{u}_{\hat{y}}}{ \partial \hat{y}} \Bigr),
 \\
   \Big( \frac{\partial}{\partial \hat{t}} - \hat\Delta \Big) \hat{u}_{\hat{y}} 
   &+ \frac{\partial \hat p}{\partial \hat{y}} 
= \mathfrak{R}\hat{x}\frac{\partial \hat{u}_{\hat{y}}}{\partial \hat{y}} + 
\hat{\omega} \frac{(1 - \mu)(1 + \eta^2)}{1 - \eta^2} \hat{u}_{\hat{x}} - \hat{\omega}(1 + \mu) \hat{u}_{\hat{x}} - (\hat{u} \cdot \nabla) \hat{u}_{\hat{y}}
  \\
   & 
+  \cO(1 - \eta) \Big( \hat{u}_{\hat{x}} \hat{u}_{\hat{y}} + \frac{\partial \hat{u}_{\hat{x}}}{ \partial \hat{y}} \Bigr) 
+ \cO \Bigl(\hat{\omega} (1 - \mu)\frac{\partial \hat{u}_{\hat{y}}}{\partial \hat{y}} \Bigr)+ \cO(1 - \eta)^2 \hat{u}_{\hat{y}},
\\
   \Big( \frac{\partial}{\partial \hat{t}} - \hat\Delta \Big) \hat{u}_{\hat{z}} 
   &+ \frac{\partial \hat p}{\partial \hat{z}} 
   = \mathfrak{R}\hat{x}\frac{\partial \hat{u}_{\hat{z}}}{\partial \hat{y}}  - (\hat{u} \cdot \nabla) \hat{u}_{\hat{z}} 
   +\cO \Bigl( \hat{\omega} (1 - \mu)\frac{\partial \hat{u}_{\hat{z}}}{\partial \hat{y}} \Bigr) ,
\end{align*}
$$
    \frac{\partial \hat{u}_{\hat{x}}}{\partial \hat{x}} 
    +\frac{\partial \hat{u}_{\hat{y}}}{\partial \hat{y}} + \frac{\partial \hat{u}_{\hat{z}}}{\partial \hat{z}} 
    = \cO(1-\eta) \Bigl( \hat{u}_{\hat{x}} + \frac{\partial \hat{u}_{\hat{y}}}{ \partial \hat{y}} \Bigr).\nonumber
$$
In this system, $\cO(\cdot)$ denotes various smooth functions that go to zero in the limit \eqref{limit}.
We drop the hats and the linearized system takes the form 
\begin{equation} \label{initial}
\partial_t u = {\cal L}_{(\eta,\mu)} u.
\end{equation}


\section{Linear analysis in the axi-symmetric case}


We first focus on radial, namely axi-symmetric instabilities, which are thus independent of $y$.


\subsection{Study of the linearized operator $\mathcal{L}_{(\protect\eta ,\protect\mu )}$}


For $y$ independent functions, the linearized axisymmetric Navier-Stokes equations (\ref{initial}) read
$$   \Big( \frac{\partial}{\partial  {t}} -  \Delta \Big)  {u}_{ {x}} 
   + \frac{\partial   p}{\partial  {x}} 
   = 
     {\omega}(1 + \mu)  {u}_{ {y}}   - 2\Big( {\omega} (1 - \mu)   ({x} + \cO(1 - \eta)) \Big) {u}_{ {y}} 
  + \cO(1 - \eta)^2  {u}_{ {x}}, 
  $$
  $$
   \Big( \frac{\partial}{\partial  {t}} -  \Delta \Big)  {u}_{ {y}} 
  = \Bigl[  {\omega} \frac{(1 - \mu)(1 + \eta^2)}{1 - \eta^2} 
 -  {\omega}(1 + \mu) \Bigr]  {u}_{ {x}} 
+ \cO(1 - \eta)^2  {u}_{ {y}},
   $$
   $$
   \Big( \frac{\partial}{\partial  {t}} -  \Delta \Big)  {u}_{ {z}} 
   + \frac{\partial   p}{\partial  {z}} 
   =  0,
   $$
   $$
    \frac{\partial  {u}_{ {x}}}{\partial  {x}} 
     + \frac{\partial  {u}_{ {z}}}{\partial  {z}} 
    = \cO(1-\eta)  {u}_{ {x}} .
$$
Let us take the Fourier transform in the $z$
variable and denote by $\alpha $ the Fourier variable. We moreover take the
Laplace transform in time and denote by $\lambda $ the Laplace variable.
Rescaling $u_{y}$ with 
\[
u_{y}=\mathfrak{R}\widehat{u}_{y},
\]%
and assuming that $\omega \mathfrak{R}$ is bounded, this leads to the system
(where $D=\partial /\partial x)$%
\begin{eqnarray*}
(\lambda +\alpha ^{2}-D^{2})u_{x}+Dp-T\widehat{u}_{y} &=&\mathcal{O}(1-\mu )%
\widehat{u}_{y}+\mathcal{O}(1-\eta )^{2}u_{x}, \\
(\lambda +\alpha ^{2}-D^{2})\widehat{u}_{y}-u_{x} &=&\mathcal{O} \Bigl(\frac{%
\omega }{\mathfrak{R}} \Bigr) u_{x}+\mathcal{O}(1-\eta )^{2}\widehat{u}_{y}, \\
(\lambda +\alpha ^{2}-D^{2})u_{z}+i\alpha p &=&0, \\
Du_{x}+i\alpha u_{z} &=&\mathcal{O}(1-\eta )u_{x}.
\end{eqnarray*}%
Now, we multiply the first equation by $\alpha ^{2}$ and add $i\alpha D$ times
the third one, which gives
\[
(\lambda +\alpha ^{2}-D^{2})(\alpha ^{2}u_{x}+i\alpha Du_{z})-\alpha ^{2}T%
\widehat{u}_{y}=\mathcal{O}(1-\mu )\widehat{u}_{y}+\mathcal{O}(1-\eta
)^{2}u_{x}.
\]
We replace $i\alpha Du_{z}$ by its expression from the fourth equation,
and obtain
\begin{align*}
(\lambda +\alpha ^{2}-D^{2})[(\alpha ^{2}-D^{2})u_{x}+\mathcal{O}(1-\eta
)Du_{x}]-\alpha ^{2}T\widehat{u}_{y} &=\mathcal{O}(1-\mu )\widehat{u}_{y}+%
\mathcal{O}(1-\eta )^{2}u_{x}, \\
(\lambda +\alpha ^{2}-D^{2})\widehat{u}_{y}-u_{x} &=\mathcal{O}(\frac{%
\omega }{\mathfrak{R}})u_{x}+\mathcal{O}(1-\eta )^{2}\widehat{u}_{y}.
\end{align*}%
We define now%
\[
\Gamma _{x}=\left[ (\alpha ^{2}-D^{2})+\mathcal{O}(1-\eta )D\right] u_{x},
\]
then the system above becomes
\[
(\lambda +\alpha ^{2}-D^{2})\Gamma _{x}-\alpha ^{2}T\widehat{u}_{y} =
\mathcal{O}(1-\mu )\widehat{u}_{y}+\mathcal{O}(1-\eta )^{2}\left[ (\alpha
^{2}-D^{2})+\mathcal{O}(1-\eta )D\right] ^{-1}\Gamma _{x},
\]
\begin{equation*}
(\lambda +\alpha ^{2}-D^{2})\widehat{u}_{y}-\left[ (\alpha ^{2}-D^{2})+
\mathcal{O}(1-\eta )D\right] ^{-1}\Gamma _{x} =\mathcal{O}(1-\eta )^{2}\widehat{u}_{y}
+\mathcal{O} \Bigl(\frac{\omega }{\mathfrak{R}} \Bigr)u_x.
\end{equation*}
Since the operator $\left[ (\alpha ^{2}-D^{2})+\mathcal{O}(1-\eta )D\right]^{-1}$ is bounded, taking into account of the boundary conditions $u_x(\pm1/2)=0$ 
 on $u_{x},$ it is clear that, for $(1-\eta ),(1-\mu )$ and $\omega / \mathfrak{R}$ close to $0,$ adding the boundary conditions on $u_{y}$ and the complementary ones on $u_{x}$ 
the operator on the right hand side is a
small relatively compact perturbation of the operator acting on the left
side on $(\Gamma _{x},\widehat{u}_{y}).$ 

This implies that the eigenvalues $%
\lambda $ are obtained via the simple perturbation theory from the
eigenvalues of the linear operator obtained for $\eta =1,\mu =1,$ i.e.
\begin{eqnarray}
(\lambda +\alpha ^{2}-D^{2})(\alpha ^{2}-D^{2})u_{x}-\alpha ^{2}T\widehat{u}%
_{y} &=&0,  \label{eigenvalusyst} \\
(\lambda +\alpha ^{2}-D^{2})\widehat{u}_{y}-u_{x} &=&0.  \nonumber
\end{eqnarray}
\subsection{Study of $\mathcal{L}_{1,\alpha}$}

For $(\eta ,\mu )=(1,1)$ we consider the eigenvalue problem
\begin{eqnarray}
\lambda u_{x} &=&(D^{2}-\alpha ^{2})u_{x}-Dp+T\widehat{u}_{y} \nonumber\\
\lambda \widehat{u}_{y} &=&(D^{2}-\alpha ^{2})\widehat{u}_{y}+u_{x} \nonumber\\
\lambda u_{z} &=&(D^{2}-\alpha ^{2})u_{z}-i\alpha p \label{basic lin syst}\\
0 &=&Du_{x}+i\alpha u_{z},\nonumber
\end{eqnarray}%
with the boundary conditions $u_{x}=\widehat{u}_{y}=u_{z}=0$ in $x=\pm 1/2.$ We denote the right-hand side of \eqref{basic lin syst}$_{1,2,3}$ to be the operator $\mathcal{L}_{1,\alpha}$ acting on $u = (u_{x},\widehat{u}_{y},u_{z})$. As
seen above, this leads to the 6th-order differential equation
\begin{equation} \label{order6}
(\lambda +\alpha ^{2}-D^{2})^{2}(\alpha ^{2}-D^{2})u_{x}-\alpha ^{2}Tu_{x}=0,
\end{equation}
with the boundary conditions%
\[
u_{x}=Du_{x}=\widehat{u}_{y}=0,x=\pm 1/2.
\]
Then we show the following
\begin{lemma}\label{lem3.1}
Let $\alpha > 0$. The spectrum of ${\cal L}_{1,\alpha}$ is only composed of real isolated (at least double) eigenvalues  
\begin{equation*}
    ... < \lambda_n<....<\lambda_2<\lambda_1<\lambda_0
\end{equation*}
where $\lambda_0<0$ for $T \le \alpha^4$, and $\lambda_n \rightarrow -\infty$ as $n \rightarrow \infty$.
\end{lemma}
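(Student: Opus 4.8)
The plan is to work not with the scalar sixth‑order equation \eqref{order6} but with the equivalent two‑component system it comes from, since the latter exposes a self‑adjoint structure. Eliminating $p$ and $u_{z}$ from \eqref{basic lin syst} exactly as in the derivation of \eqref{order6}, the eigenvalue problem for $\mathcal{L}_{1,\alpha}$ is equivalent to
\begin{equation*}
(\alpha^{2}-D^{2})(\lambda+\alpha^{2}-D^{2})u_{x}=\alpha^{2}T\,\widehat{u}_{y},\qquad(\lambda+\alpha^{2}-D^{2})\widehat{u}_{y}=u_{x},
\end{equation*}
posed on $(-1/2,1/2)$ with $u_{x}=Du_{x}=\widehat{u}_{y}=0$ at $x=\pm 1/2$ (with $u_{z}=\tfrac{i}{\alpha}Du_{x}$ and $p$ then recovered from the remaining equations).

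First, reality. Writing $A=\alpha^{2}-D^{2}$ and rescaling $\widehat{u}_{y}=(\alpha\sqrt{T})^{-1}v$ (legitimate since $T>0$), the system becomes the generalized eigenvalue problem $\lambda BU=LU$, $U=(u_{x},v)$, with
\begin{equation*}
B=\begin{pmatrix}A&0\\0&1\end{pmatrix},\qquad L=\begin{pmatrix}-A^{2}&\alpha\sqrt{T}\\ \alpha\sqrt{T}&-A\end{pmatrix}.
\end{equation*}
I would read this weakly on $V=H^{2}_{0}\times H^{1}_{0}$ of the interval, through the positive form $b(U,U)=\|Du_{x}\|^{2}+\alpha^{2}\|u_{x}\|^{2}+\|v\|^{2}$ and the Hermitian form $\ell(U,U)=-\|Au_{x}\|^{2}-\|Dv\|^{2}-\alpha^{2}\|v\|^{2}+2\alpha\sqrt{T}\,\mathrm{Re}\langle u_{x},v\rangle$. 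Since $B$ is positive definite and $L$ is symmetric, all eigenvalues are automatically real; this is the axisymmetric substitute for the $z$-symmetry invoked for the full operator in Lemma \ref{lem4.1}.

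Next, the qualitative picture and the sign of $\lambda_{0}$. The embedding $V\hookrightarrow L^{2}\times L^{2}$ being compact, the pencil $(\ell,b)$ has compact resolvent; as $-\ell$ is coercive on $V$ and $b$ is an equivalent inner product on the larger space, the min–max principle yields real eigenvalues of finite multiplicity, isolated and accumulating only at $-\infty$, ordered $\lambda_{0}>\lambda_{1}>\cdots$. For the sign, $\lambda_{0}=\sup_{U\ne 0}\ell(U,U)/b(U,U)$, and using $\|Au_{x}\|^{2}=\alpha^{4}\|u_{x}\|^{2}+2\alpha^{2}\|Du_{x}\|^{2}+\|D^{2}u_{x}\|^{2}\ge\alpha^{4}\|u_{x}\|^{2}$ together with $\|Dv\|^{2}\ge 0$ gives
\begin{equation*}
\ell(U,U)\le-\alpha^{4}\|u_{x}\|^{2}-\alpha^{2}\|v\|^{2}+2\alpha\sqrt{T}\,\|u_{x}\|\,\|v\|.
\end{equation*}
The quadratic form in $(\|u_{x}\|,\|v\|)$ on the right is negative semidefinite exactly when $\alpha^{6}-\alpha^{2}T\ge 0$, i.e. $T\le\alpha^{4}$; keeping the discarded positive terms $2\alpha^{2}\|Du_{x}\|^{2}+\|D^{2}u_{x}\|^{2}+\|Dv\|^{2}$ makes the bound strict for $U\ne 0$ (equality would force $Du_{x}\equiv 0$, hence $U=0$), so $\lambda_{0}<0$ throughout $T\le\alpha^{4}$.

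Finally, the multiplicity follows from the axial symmetries: the physical operator commutes with translations $z\mapsto z+c$ and the reflection $z\mapsto-z$, so the two modes $e^{\pm i\alpha z}$ (equivalently $\cos\alpha z$ and $\sin\alpha z$) furnish, for each real eigenvalue, a two‑dimensional real eigenspace, whence every eigenvalue is at least double, the same mechanism as in the non‑axisymmetric case. The main obstacle, in my view, is the functional‑analytic bookkeeping rather than any single estimate: one must realize the mixed boundary conditions ($u_{x}$ clamped, $\widehat{u}_{y}$ Dirichlet) as a bona fide symmetric problem with $B>0$, and check that eliminating $p$ and $u_{z}$ neither creates nor destroys spectrum. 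Once that framework is secured, reality, isolation, the ordering and the accumulation at $-\infty$ are routine, and the threshold $T=\alpha^{4}$ is precisely where the quadratic form above ceases to be negative definite.
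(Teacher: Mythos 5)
Your proof is correct, and it takes a route that differs in formulation from the paper's, though the underlying mechanism (symmetrization, compactness, a quadratic-form estimate) is the same. The paper keeps all three components: it works with $\mathcal{L}_{1,\alpha}$ acting on the divergence-free space of $(u_x,\widehat{u}_y,u_z)$ and obtains symmetry directly, via integration by parts and the divergence-free condition, with respect to the weighted product $\int u_x\overline{v_x}+T\widehat{u}_y\overline{\widehat{v}_y}+u_z\overline{v_z}\,dx$; self-adjointness and compact resolvent (by comparison with the Stokes operator) then give reality, discreteness and accumulation at $-\infty$, and the sign of $\lambda_0$ for $T\le\alpha^4$ is left as "study $\langle\mathcal{L}_{1,\alpha}u,u\rangle$" (which yields exactly your threshold: the cross term $2T\,\mathrm{Re}\langle u_x,\widehat{u}_y\rangle$ against the diagonal terms $-\alpha^2\|u_x\|^2-T\alpha^2\|\widehat{u}_y\|^2$ is negative semidefinite iff $T\le\alpha^4$). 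You instead eliminate $p$ and $u_z$ and recast the problem as a self-adjoint pencil $\lambda BU=LU$; your rescaling $\widehat{u}_y=(\alpha\sqrt{T})^{-1}v$ plays exactly the role of the paper's $T$-weight, and your pair $(B,L)$ is precisely the structure $(H_0,L_0)$ that the paper itself introduces later, in the proof of Lemma \ref{lem4.2}, so your formulation anticipates machinery the paper uses anyway. What your version buys is an explicit, checkable proof of $\lambda_0<0$ for $T\le\alpha^4$, which the paper only sketches; what it costs is the need to verify that the elimination of $p$ and $u_z$ is spectrum-preserving — you flag this, and it does hold for $\alpha\neq 0$ since $u_z=(i/\alpha)Du_x$ and $p$ is recovered from the third equation, the reduction being the same one the paper performs to derive \eqref{order6}. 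Two small points of bookkeeping: the assertion that $-\ell$ is coercive on $V$ is not literally true for large $T$ (the form is then indefinite — that is the whole point of instability); what you need, and what holds, is a G\aa rding-type bound $\ell(U,U)\le Cb(U,U)$ with $-\ell+Cb$ coercive, which suffices for discreteness, the ordering and accumulation at $-\infty$. Also, strict negativity $\lambda_0<0$ requires either attainment of the supremum (available from your compactness framework) or the quantitative bound $\ell\le-\varepsilon b$ that follows from keeping the terms $2\alpha^2\|Du_x\|^2+\|Dv\|^2$ and applying Poincar\'e; your parenthetical argument is the right idea but deserves that one extra line.
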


\begin{proof}
The linear operator $\mathcal{L}_{1,\alpha }$ is symmetric with the following
suitable scalar product in $[L^{2}(-1/2,1/2)]^{3}:$%
\[
\langle u,v\rangle =\int_{-1/2}^{1/2}
u_{x}\overline{v_{x}}+T\hat{u}_{y}\overline{\hat{v}_{y}}+u_{z}\overline{v_{z}} \, dx.
\]
Indeed we obtain, using the divergence free condition on $u$ and $v,$ and integration by parts
\[
\langle \mathcal{L}_{1,\alpha }u,v\rangle =\langle u,\mathcal{L}_{1,\alpha
}v\rangle .
\]%
Since it is known that the Stokes operator on 
$$
\Bigl\{L^{2}[(-1/2,1/2)\times \mathbb{R} /(2\pi /\alpha ) \mathbb{Z}] \Bigr\}^{2}
$$ 
with the above boundary conditions
is self-adjoint, negative, and
idem for the operator acting on $u_{y}$, we deduce that the operator $%
\mathcal{L}_{1,\alpha }$ is self-adjoint, hence its spectrum is real. Moreover, as for the Stokes
operator, it has a compact resolvent, hence all its eigenvalues are real,
discrete, with only an accumulation at $-\infty .$ They all are at least
double because of the symmetry $\pm \alpha$.

Moreover, the largest eigenvalue $\lambda _{0}$
is negative if $T\leq\alpha ^{4}$ as can be seen by 
studying $\langle \mathcal{L}_{1,\alpha }u,u\rangle$.
\end{proof}

We now study when $0$ is an eigenvalue of ${\cal L}_{1,\alpha}$.
\begin{lemma}\label{lem3.2}
Let $\alpha > 0$,
$$
\sigma = (T\alpha ^{2})^{1/3},
$$
and let
\begin{eqnarray}
b_{1}^{2} &=&\sigma -\alpha ^{2}>0 
\nonumber \\
a_{2} &=&\frac{1}{\sqrt{2}}\sqrt{\alpha ^{2}+\frac{\sigma }{2}+\sqrt{\alpha
^{4}+\alpha ^{2}\sigma +\sigma ^{2}}},  \label{lambda_j} \\
b_{2} &=&-\frac{1}{\sqrt{2}}\sqrt{\sqrt{\alpha ^{4}+\alpha ^{2}\sigma +\sigma
^{2}}-\alpha ^{2}-\frac{\sigma }{2}}.  \nonumber
\end{eqnarray}%
Then $\lambda = 0$ is an eigenvalue of ${\cal L}_{1,\alpha}$ if and only if
$\mathfrak{F}_1 = 0$ or $\mathfrak{F}_2 = 0$ where
\begin{equation}
\mathfrak{F}_1:=-b_{1}\tan \frac{b_{1}}{2}(\cosh a_{2}+\cos b_{2})+(\sqrt{3}%
b_{2}-a_{2})\sinh a_{2}+(\sqrt{3}a_{2}+b_{2})\sin b_{2}  \label{delta+}
\end{equation}%
and
\begin{equation}
\mathfrak{F}_2:=b_{1}(\cosh a_{2}-\cos b_{2})+\tan \frac{b_{1}}{2}[(\sqrt{3}%
b_{2}-a_{2})\sinh a_{2}-(\sqrt{3}a_{2}+b_{2})\sin b_{2}] .  \label{delta -}
\end{equation}
The solutions of $\mathfrak{F}_1 =0$ give components $u_x, u_y$ even in $x$, and $u_z$ odd in $x$. The solutions of $\mathfrak{F}_2 =0$ give components $u_x, u_y$ odd in $x$ and $u_z$ even in $x$.
\end{lemma}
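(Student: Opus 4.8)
The plan is to solve the constant–coefficient sixth–order ODE \eqref{order6} with $\lambda=0$ explicitly, and then impose the six boundary conditions to obtain a $6\times 6$ determinant condition which factors into the two stated functions. First I would set $\lambda=0$ in \eqref{order6}, obtaining
\[
(\alpha^2-D^2)^3 u_x = \alpha^2 T\, u_x,
\]
so that the characteristic roots $k$ solve $(\alpha^2-k^2)^3=\alpha^2 T$. Writing $\alpha^2-k^2=\sigma\,\zeta$ where $\zeta^3=1$ and $\sigma=(T\alpha^2)^{1/3}$, the three cube roots of unity $\zeta\in\{1,e^{2\pi i/3},e^{-2\pi i/3}\}$ give $k^2=\alpha^2-\sigma\zeta$. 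The root $\zeta=1$ yields $k^2=\alpha^2-\sigma=-b_1^2<0$ (using $b_1^2=\sigma-\alpha^2>0$, which holds precisely because at criticality $T>\alpha^4$ forces $\sigma>\alpha^2$), hence a purely imaginary pair $k=\pm i b_1$. The complex–conjugate pair $\zeta=e^{\pm 2\pi i/3}$ gives $k^2=\alpha^2+\tfrac{\sigma}{2}\pm i\tfrac{\sqrt3}{2}\sigma$, and a direct computation of the square roots of these complex numbers produces exactly the quantities $a_2\pm i b_2$ defined in \eqref{lambda_j}, so the six characteristic roots are $\{\pm i b_1,\ \pm(a_2+i b_2),\ \pm(a_2-i b_2)\}$. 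This identifies $a_2,b_2$ as the real and imaginary parts of $\sqrt{\alpha^2+\tfrac{\sigma}{2}+i\tfrac{\sqrt3}{2}\sigma}$, which explains the form $\sqrt{\alpha^4+\alpha^2\sigma+\sigma^2}$ appearing under the inner radicals (this is $|{\alpha^2+\tfrac{\sigma}{2}+i\tfrac{\sqrt3}{2}\sigma}|$).

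Next I would exploit the $x\mapsto-x$ symmetry of the operator and the symmetric interval $(-1/2,1/2)$. Since the equation is invariant under $x\to-x$, solutions split into an even subspace and an odd subspace, and the boundary conditions at $x=+1/2$ and $x=-1/2$ decouple accordingly; this is precisely the mechanism that factors the full determinant into the two functions $\mathfrak{F}_1$ and $\mathfrak{F}_2$. For the even solutions I would take the general even combination
\[
u_x(x)=c_1\cos(b_1 x)+c_2\cosh(a_2 x)\cos(b_2 x)+c_3\sinh(a_2 x)\sin(b_2 x),
\]
which is the even part built from the six exponentials $e^{\pm i b_1 x}$ and $e^{\pm(a_2\pm i b_2)x}$; the three boundary conditions $u_x=Du_x=\widehat u_y=0$ at $x=1/2$ (those at $x=-1/2$ being automatic by parity) give a $3\times3$ homogeneous linear system in $(c_1,c_2,c_3)$. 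Here $\widehat u_y$ is recovered from $u_x$ via the relation $(\alpha^2-D^2)\widehat u_y=u_x$ coming from the second equation of \eqref{eigenvalusyst} at $\lambda=0$, equivalently $\widehat u_y=(\alpha^2-D^2)^{-2}\cdot\tfrac1{\alpha^2 T}(\alpha^2-D^2)^3 u_x$; concretely, on each exponential mode $e^{kx}$ one has $(\alpha^2-k^2)\widehat u_y=$ (the corresponding piece of $u_x$), so $\widehat u_y$ is the same combination with each coefficient divided by $(\alpha^2-k^2)=\sigma\zeta$. The determinant of the resulting $3\times3$ system, after clearing common nonzero factors and simplifying the trigonometric/hyperbolic entries, should collapse to $\mathfrak{F}_1$ in \eqref{delta+}; the appearance of $\tan(b_1/2)$ reflects evaluating $\cos,\sin$ of $b_1 x$ and their derivatives at $x=1/2$ and eliminating one coefficient. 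Symmetrically, the odd ansatz
\[
u_x(x)=c_1\sin(b_1 x)+c_2\sinh(a_2 x)\cos(b_2 x)+c_3\cosh(a_2 x)\sin(b_2 x)
\]
produces $\mathfrak{F}_2$ in \eqref{delta-}, and by construction the even family has $u_x,\widehat u_y$ even and (through $Du_x+i\alpha u_z=0$, so $u_z\propto Du_x$) $u_z$ odd, while the odd family has the opposite parities, giving the final parity assertions.

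The main obstacle I anticipate is purely computational rather than conceptual: verifying that the raw $3\times3$ boundary determinants simplify to the precise closed forms \eqref{delta+} and \eqref{delta-}, including the exact coefficients $(\sqrt3 b_2-a_2)$ and $(\sqrt3 a_2+b_2)$ in front of $\sinh a_2$ and $\sin b_2$. These coefficients are not arbitrary: they must arise from combining the factors $1/(\sigma\zeta)$ with $\zeta=e^{\pm2\pi i/3}$ that enter through $\widehat u_y$, since $1/e^{\pm2\pi i/3}=e^{\mp2\pi i/3}=-\tfrac12\mp i\tfrac{\sqrt3}{2}$, and the $\sqrt3$ factors trace back to the imaginary parts of these cube roots of unity interacting with the real/imaginary split $a_2\pm i b_2$. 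I would therefore carry out the algebra by keeping the complex roots $a_2\pm i b_2$ paired and using the identities $\cosh(a_2+ib_2)=\cosh a_2\cos b_2+i\sinh a_2\sin b_2$ throughout, matching real and imaginary parts at the end; the key bookkeeping is tracking how the weights $1/(\alpha^2-k^2)$ from the $\widehat u_y$ condition redistribute into those specific linear combinations of $a_2$ and $b_2$. Once the determinant is shown to factor and simplify, the stated equivalence ``$\lambda=0$ is an eigenvalue iff $\mathfrak{F}_1=0$ or $\mathfrak{F}_2=0$'' follows immediately, since a nontrivial solution of the ODE satisfying all six boundary conditions exists precisely when one of the two reduced determinants vanishes.
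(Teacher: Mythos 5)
Your proposal is correct and, in substance, follows the paper's proof: same characteristic equation $(\alpha^2-k^2)^3=\alpha^2 T$ at $\lambda=0$, same six roots $\pm ib_1,\ \pm(a_2\pm ib_2)$ (with $\sigma>\alpha^2$ justified the same way, via Lemma \ref{lem3.1}), same handling of the $\widehat u_y$ boundary condition, and the same two $3\times 3$ determinants whose vanishing defines $\mathfrak{F}_1$ and $\mathfrak{F}_2$. The one structural difference is how the even/odd factorization is obtained. The paper does not invoke parity a priori: it assembles the full $6\times 6$ boundary determinant in the coefficients $A_j,B_j$ of $e^{\pm\lambda_j x}$, rearranges it into the block form
\[
\Delta_{\alpha,\tau}=\left|\begin{array}{cc} M & N\\ N & M \end{array}\right|=|M+N|\,|M-N|,
\]
and only afterwards observes that $|M+N|=0$ corresponds to $A_1=B_1$, $A_2=B_2$, $A_3=B_3$, i.e.\ to $u_x,u_y$ even (giving $\mathfrak{F}_1$), and $|M-N|=0$ to the odd case (giving $\mathfrak{F}_2$). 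You instead use the $x\mapsto -x$ equivariance to split into even and odd subspaces from the start and write down the two real ansätze directly; this is cleaner, yields smaller determinants immediately, and makes the parity claims of the lemma automatic, but it relies on the (easy, and worth stating explicitly) observation that any eigenfunction decomposes into even and odd eigenfunctions, so that no nontrivial solution is missed by restricting to the two ansätze — the paper's block algebra gets this for free. Both arguments leave the same final step to ``direct computation,'' namely the trigonometric/hyperbolic collapse of the $3\times 3$ determinants to the closed forms with coefficients $\sqrt3\,b_2-a_2$ and $\sqrt3\,a_2+b_2$; your accounting of where the $\sqrt3$'s originate (the factors $1/\zeta=j^2,j$ entering through $\widehat u_y$) matches the paper's determinants, whose third rows carry exactly those factors $j,j^2$. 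One small slip to fix: your intermediate formula $\widehat u_y=(\alpha^2-D^2)^{-2}\cdot\frac1{\alpha^2 T}(\alpha^2-D^2)^3u_x$ has a wrong exponent (it should read $\widehat u_y=\frac1{\alpha^2 T}(\alpha^2-D^2)^{2}u_x$, from the first equation of \eqref{eigenvalusyst}); the per-mode rule you actually use afterwards — divide each coefficient of $u_x$ by $\alpha^2-k^2=\sigma\zeta$ — is the correct one and is all the computation needs.
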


The proof of this Lemma is in Appendix \ref{App2}.
We now study the zeroes of $\mathfrak{F}_1$ and $\mathfrak{F}_2$.

\begin{lemma}
    Let $\alpha > 0$. Then $\mathfrak{F}_1(\alpha,T) = 0$ and $\mathfrak{F}_2(\alpha,T)=0$
    have infinitely many positive solutions $T$.
\end{lemma}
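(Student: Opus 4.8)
The plan is to fix $\alpha>0$ and to regard $\mathfrak{F}_1$ and $\mathfrak{F}_2$ as functions of the single variable $b_1\in(0,\infty)$. Indeed $\sigma=(T\alpha^2)^{1/3}$ is an increasing bijection of $T$, so since $b_1=\sqrt{\sigma-\alpha^2}$ the map $T\mapsto b_1$ is an increasing bijection from $(\alpha^4,\infty)$ onto $(0,\infty)$, with $a_2$ and $b_2$ continuous functions of $\sigma=b_1^2+\alpha^2$. It therefore suffices to exhibit infinitely many zeros in $b_1$. Before that I would record two elementary facts. Using $\sqrt{\alpha^4+\alpha^2\sigma+\sigma^2}\le \sigma+\alpha^2$ one gets $\tfrac{\sqrt\sigma}{2}\le a_2\le\tfrac1{\sqrt2}\sqrt{\tfrac{3\sigma}2+2\alpha^2}$ and $|b_2|\le\tfrac{\sqrt\sigma}{2}$, so $a_2,|b_2|=\cO(\sqrt\sigma)$ grow only polynomially while $a_2\to\infty$; consequently $\sinh a_2$ and $\cosh a_2$ grow exponentially and dominate every polynomial factor in $a_2,b_2,b_1$. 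Second, since $b_2<0<a_2$ one has $\sqrt3\,b_2-a_2<0$ for all $\sigma$, and $\cosh a_2\pm\cos b_2>0$ for $a_2>0$.

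For $\mathfrak{F}_1$ I would track its sign across the poles of $\tan\frac{b_1}{2}$, located at $b_1=(2m-1)\pi$. As $b_1\to(2m-1)\pi^+$ we have $\tan\frac{b_1}{2}\to-\infty$, and since $b_1>0$ and $\cosh a_2+\cos b_2>0$ the leading term $-b_1\tan\frac{b_1}{2}(\cosh a_2+\cos b_2)$ forces $\mathfrak{F}_1\to+\infty$. At the intermediate point $b_1=2m\pi$ one has $\tan\frac{b_1}{2}=0$, so
\[
\mathfrak{F}_1(2m\pi)=(\sqrt3\,b_2-a_2)\sinh a_2+(\sqrt3\,a_2+b_2)\sin b_2 .
\]
The first summand is negative with magnitude at least $a_2\sinh a_2$ (because $-\sqrt3\,b_2>0$), while the second is bounded by $\sqrt3\,a_2+|b_2|=\cO(\sqrt\sigma)$; by the exponential domination above, $\mathfrak{F}_1(2m\pi)<0$ for all large $m$. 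Since $\mathfrak{F}_1$ is continuous on $((2m-1)\pi,2m\pi]$, tends to $+\infty$ at the left endpoint and is negative at the right, the intermediate value theorem produces a zero in each such interval, hence infinitely many.

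The argument for $\mathfrak{F}_2$ is parallel, with the two benchmark points interchanged. At $b_1=2m\pi$ the $\tan$-term vanishes and $\mathfrak{F}_2(2m\pi)=b_1(\cosh a_2-\cos b_2)>0$. Approaching the next pole, $b_1\to(2m+1)\pi^-$, we have $\tan\frac{b_1}{2}\to+\infty$ multiplying the bracket $(\sqrt3\,b_2-a_2)\sinh a_2-(\sqrt3\,a_2+b_2)\sin b_2$, whose exponentially dominant part $(\sqrt3\,b_2-a_2)\sinh a_2$ is negative; hence $\mathfrak{F}_2\to-\infty$. Continuity on $[2m\pi,(2m+1)\pi)$ and the intermediate value theorem again yield a zero in each interval. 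Undoing the substitution via $T=(b_1^2+\alpha^2)^3/\alpha^2$ turns both families of zeros in $b_1$ into infinitely many positive values of $T$, which is the claim.

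I expect the only genuine obstacle to be making the \emph{exponential-beats-polynomial} comparison fully rigorous: everything rests on the explicit $\sigma$-bounds for $a_2$ and $b_2$ recorded in the first paragraph, which guarantee that $\sinh a_2,\cosh a_2$ dominate the coefficients $\sqrt3\,b_2-a_2$, $\sqrt3\,a_2+b_2$ and $b_1$ uniformly once $m$ is large. The rest is bookkeeping about the sign of $\tan\frac{b_1}{2}$ near its poles and at its zeros.
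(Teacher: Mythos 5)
Your proof is correct and takes essentially the same route as the paper: the paper's own (very terse) proof records the large-$\sigma$ asymptotics $b_1\sim\sqrt{\sigma}$, $a_2\sim\sqrt{3\sigma}/2$, $b_2=\cO(\sqrt{\sigma})$ and then declares the conclusion ``straightforward'' from \eqref{delta+} and \eqref{delta -}, which is precisely the oscillation-of-$\tan(b_1/2)$ versus exponential-dominance-of-$\sinh a_2$ argument you carry out. Your intermediate-value bookkeeping on the intervals $\bigl((2m-1)\pi,2m\pi\bigr]$ and $\bigl[2m\pi,(2m+1)\pi\bigr)$ supplies exactly the details the paper omits (and your bound $|b_2|\le\sqrt{\sigma}/2$ tacitly corrects the paper's typo $b_2\sim-\sigma/2$).
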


\begin{proof}
Note that, if we define $\tilde \sigma = \sigma / \alpha^2$, $a_2$ may be rewritten
$$
a_{2} = \frac{|\alpha|}{\sqrt{2}}  \sqrt{1+\frac{\tilde \sigma }{2}+\sqrt{1+ \tilde \sigma + \tilde \sigma^2}},
$$
and similarly for $b_2$.
Moreover, we have for large $\tilde \sigma$
$$
b_1 \sim \sqrt{\sigma}, \quad a_2 \sim \frac{\sqrt{3\sigma}}{2}, \quad b_2 \sim - \frac{\sigma}{2}.
$$
The end of the proof is then straightforward from expressions \eqref{delta+} and \eqref{delta -}.
\end{proof}

\begin{definition}
    For $\alpha > 0$, we define $T_c(\alpha)$ to be the smallest positive zero of $\mathfrak{F}_1$ and $\mathfrak{F}_2$.
\end{definition}

\begin{remark}\label{rem even}
    It is very easy to compute numerically $T_c(\alpha)$.
    Numerically, we find that $\mathfrak{F}_1 =0$ gives $T_c(\alpha)$ minimum for $\alpha = \alpha_c \approx 3.117$ 
    and $T_c(\alpha_c) \approx 1708$ (see Appendix \ref{Appendix22}), 
    which are the classical values \cite{Drazin}. The equation $\mathfrak{F}_2 =0$ gives a larger value for $T_c$. This is analogue to what happens (see \cite{Pellew}) in the convection problem, where the components $u_x,u_y$ of the eigenvector which are even in $x$ are more unstable that the odd ones.

\end{remark}


\subsection{A first bifurcation}


When $T < T_c$, all the eigenvalues of $\mathcal{L}_{1,\alpha}$ are
real and negative.
When $T = T_c$,  one eigenvalue is $0$.
For $T > T_c$, close to $T_c$,  one eigenvalue is positive and small.
As a consequence, taking into account of the $O(2)$ symmetry, we have a pitchfork bifurcation (see \cite{Iooss}) at $T_c$.

The eigenvalues and eigenvectors of the full system are perturbations 
of the eigenvalues and eigenvectors of $\mathcal{L}_{1,\alpha}$. 
They are of the form
$$
\zeta(x,z) = e^{\pm i \alpha z} \left( \begin{array}{c}
O(\mathfrak{R}^{-1}) \cr O(1) \cr O(\mathfrak{R}^{-1}) \end{array} \right).
$$
Let $\alpha = \alpha_c$. The eigenmodes corresponding to $0$ are of the form
\begin{equation*}
\zeta(x,z) = e^{ i\alpha_c z}\widehat{v}(x)
\end{equation*}%
with its complex conjugate for $-\alpha_c$. The bifurcating flow is called the ``Taylor vortex flow"  (TVF)
and has the form (we choose a solution invariant under the symmetry $z\rightarrow
-z)$:%
\begin{equation*}
A(\zeta +\overline{\zeta })+\mathcal{O}(|A|^{2}),\text{ }A\in 
\mathbb{R}.
\end{equation*}%
The Landau equation of $A$ is described in Appendix \ref{coefLandauequ} (see in particular formula
(\ref{Landauequ})).
We have a line of solutions in translating the $z$ coordinate
(changing the phase of amplitude $A)$. On this line there is another
symmetric solution obtained by shifting $z$ to $z+\pi /\alpha_c .$


\section{Linear analysis in the non axisymmetric case}


We now study general, non axi-symmetric perturbations.
 

\subsection{Set up}


At the leading order of the linearized system \eqref{initial}, we obtain the following system
\begin{align*}
   \Big( \frac{\partial}{\partial  {t}} -  \Delta \Big)  {u}_{ {x}} + \frac{\partial   p}{\partial  {x}} 
   &= {\mathfrak R} x \frac{\partial  {u}_{ {x}}}{\partial  {y}} 
   + 2 \omega  {u}_{ {y}} , \\
   \Big( \frac{\partial}{\partial  {t}} -  \Delta \Big)  {u}_{ {y}} + \frac{\partial   p}{\partial  {y}} 
   &= {\mathfrak R} x \frac{\partial  {u}_{ {y}}}{\partial  {y}} + \mathfrak{R} {u}_{ {x}},\\
   \Big( \frac{\partial}{\partial  {t}} -  \Delta \Big)  {u}_{ {z}} + \frac{\partial   p}{\partial  {z}} 
   &=  {\mathfrak R} x  \frac{\partial  {u}_{ {z}}}{\partial  {y}} ,\\
    \frac{\partial  {u}_{ {x}}}{\partial  {x}} +\frac{\partial  {u}_{ {y}}}{\partial  {y}} 
    + \frac{\partial  {u}_{ {z}}}{\partial  {z}} 
    &= 0.
\end{align*}
We take the Fourier transform in $z$ with dual variable $\alpha$, the Fourier transform in $y$, with dual 
variable $\beta$, and the Laplace transform in time, with dual variable $\lambda$. This leads to
\begin{align*}
    (\lambda - \partial_x^2 + \alpha^2 + \beta^2) u_x + \frac{\partial p}{\partial x} &= i \beta \mathfrak{R} x u_x + 2 \omega u_y,\\
(\lambda - \partial_x^2 + \alpha^2 + \beta^2) u_y + i \beta p &= i \beta \mathfrak{R} x  u_y + \mathfrak{R} u_x,\\
(\lambda - \partial_x^2 + \alpha^2 + \beta^2) u_z + i \alpha p &= i \beta \mathfrak{R} x  u_z,\\
\frac{\partial u}{\partial x} + i \beta u_y + i \alpha u_z &= 0.
\end{align*}
It is thus natural to assume that 
$$
\mathfrak{B} = \beta \mathfrak{R}
$$ 
remains constant as $\mathfrak{R}$ goes to infinity.
As a consequence, $\beta$ goes to $0$ as $\mathfrak{R}$ goes to infinity. The system thus simplifies into
\begin{equation}\label{eqn:nonaxi}
    \begin{aligned}
    (\lambda +\alpha ^{2}-D^{2}-i\mathfrak{B}x)u_{x}+Dp &=2\omega u_{y}, \\
(\lambda +\alpha ^{2}-D^{2}-i\mathfrak{B}x)u_{y}+i\beta p &=\mathfrak{R%
}u_{x}, \\
(\lambda +\alpha ^{2}-D^{2}-i\mathfrak{B}x)u_{z}+i\alpha p &=0, \\
Du_{x}+i\beta u_{y}+i\alpha u_{z} &=0,
\end{aligned}
\end{equation}
where $D = \partial_x$. 
Applying $(\lambda +\alpha ^{2}-D^{2}-i\mathfrak{B}x)$ to \eqref{eqn:nonaxi}$_4$ 
and using \eqref{eqn:nonaxi}$_{2,3}$ leads to%
\begin{equation}\label{eqn:nonaxi_u}
    (\lambda +\alpha ^{2}-D^{2}-i\mathfrak{B}x)Du_{x}+(\alpha ^{2}+\beta ^{2})p=-i\mathfrak{B}u_{x}.
\end{equation}
Hence, $\beta ^{2}$ should be neglected with respect to $\alpha ^{2}.$ Applying $-D$ to \eqref{eqn:nonaxi_u} and adding $\alpha^2$ times \eqref{eqn:nonaxi}$_1$, we have
\[
(\lambda +\alpha ^{2}-D^{2}-i\mathfrak{B}x)(\alpha ^{2}-D^{2})u_{x}=2\omega \alpha ^{2}u_{y}.
\]
Finally, rescaling $u_y$ in  
\[
u_{y}=\mathfrak{R}\widehat{u}_{y},
\] 
it is now clear that $i\beta p$ should be neglected in the second
line of (\ref{eqn:nonaxi}).
We obtain the system%
\begin{eqnarray} \label{non1}
(\lambda +\alpha ^{2}-D^{2}-i\mathfrak{B}x)(\alpha ^{2}-D^{2})u_{x}
 &=&T\alpha ^{2}\widehat{u}_{y}\\
(\lambda +\alpha ^{2}-D^{2}-i\mathfrak{B}x)\widehat{u}_{y} &=&u_{x}, \label{non2}
\end{eqnarray}%
where the parameters are $\alpha ^{2},T,\mathfrak{B}$, and where the boundary
conditions are%
\[
u_{x}=\widehat{u}_{y}=Du_{x}=0,\quad x=\pm 1/2.
\]
In the sequel, we will drop the hat on $u_y$.


\subsection{Expansion of $\lambda$}\label{explambda}


In the previous section we have rescaled $u_y$ in $u_{y}=\mathfrak{R}\widehat{u}_{y},$
and have neglected $\beta^2$ with respect to $\alpha^2$ and $\beta p$ in the equation on $u_y$.
The linear system which we study (to be compared with (\ref{initial}))  is now
\begin{eqnarray}
(\partial _{t}-\Delta _{\bot })u_{\bot }+\nabla _{\bot }p &=&\mathfrak{R}%
x\partial _{y}u_{\bot }+(T\widehat{u}_{y},0)^{t},  \label{newlinear} \\
(\partial _{t}-\Delta _{\bot })\widehat{u}_{y} &=&\mathfrak{R}x\partial _{y}%
\widehat{u}_{y}+u_{x},  \nonumber \\
\nabla_{\bot} \cdot u_{\bot }+\mathfrak{R}\partial _{y}\widehat{u}_{y} &=&0,  \nonumber
\end{eqnarray}%
with $2\pi /\alpha $ periodicity in $z$ and boundary conditions%
\[
u_{\bot }=\widehat{u}_{y}=0,x=\pm 1/2,
\]%
and where the index $\bot $ means $(x,z)$ components. Looking for solutions
of the form%
\[
u=e^{i(\alpha z+\beta y)+\lambda t}(u_{\bot },\widehat{u}_{y})^{t},
\]%
where $u_{\bot },\widehat{u}_{y}$ are functions of $x$,
we obtain the system (\ref{non1},\ref{non2}) in setting $\mathfrak{B}=\beta \mathfrak{R}$,
 $T=2\omega \mathfrak{R}$.

We know that $\lambda =0$ is the largest eigenvalue when $\mathfrak{B}=0,$
and $T=T_c(\alpha, 0 ).$ Moreover, $T_c(\alpha,0)$ has a strict minimum at $\alpha _{c}$ (see figure \ref{axi} for numerical evidence), i.e.
\[
\frac{dT_c}{d\alpha }(\alpha _{c}, 0)=0,
\qquad 
\frac{d^{2}T_c}{d\alpha ^{2}}(\alpha_c,0)>0.
\]%
We note that, for a given $(\alpha,\mathfrak{B})$,
the eigenvalue  $\lambda$ gives a unique eigenvector $U=(u_{x},u_{y})(x)$, 
even though $\lambda$ is a double eigenvalue when we take into account of the symmetry $z \rightarrow -z$ and of the third component $u_{z}$. 
The analyticity of the linear operator in $(\alpha,\mathfrak{B})$ leads to the same analyticity for $\lambda$.

We first prove that the eigenvalues $\lambda$ are real.

\begin{lemma}\label{lem4.1}
The eigenvalues $\lambda$ are real and are even functions of $\alpha$ and of $\mathfrak{B}$.
\end{lemma}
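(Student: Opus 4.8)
The plan is to read off all three properties from two elementary symmetries of the reduced pencil \eqref{non1}--\eqref{non2}: the spatial reflection $x\mapsto -x$ and complex conjugation. Let $R$ denote the parity operator $(Ru)(x)=u(-x)$; it is an involution commuting with $D^2$ and with $\alpha^2-D^2$, and it satisfies $R\,(x\,\cdot)\,R=-\,(x\,\cdot)$ as multiplication operators, while the boundary conditions $u_x=u_y=Du_x=0$ at $x=\pm 1/2$ are $R$-invariant. Evenness in $\alpha$ is then immediate: the system \eqref{non1}--\eqref{non2} and its boundary conditions involve $\alpha$ only through $\alpha^2$, so the admissible set of $\lambda$ is unchanged under $\alpha\mapsto-\alpha$; I would record this and henceforth fix $\alpha$. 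For evenness in $\mathfrak{B}$, I would show that $R$ intertwines the problem at $\mathfrak{B}$ with the one at $-\mathfrak{B}$: applying $R$ to a solution $(\lambda,u_x,u_y)$ and using $R(\alpha^2-D^2)R=\alpha^2-D^2$ together with $R(-i\mathfrak{B}x)R=+i\mathfrak{B}x$, one checks that $(\lambda,Ru_x,Ru_y)$ solves the same system with $\mathfrak{B}$ replaced by $-\mathfrak{B}$ and with the same $\lambda$. Hence the spectra at $\pm\mathfrak{B}$ coincide and $\lambda$ is even in $\mathfrak{B}$.

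The reality is the delicate point. Complex conjugating \eqref{non1}--\eqref{non2} sends $(\lambda,\mathfrak{B})\mapsto(\bar\lambda,-\mathfrak{B})$, every other coefficient being real; composing with $R$, which restores $\mathfrak{B}$, shows that the antilinear involution $\mathcal{T}=R\,C$ (parity times conjugation, $\mathcal{T}^2=I$) commutes with the pencil at fixed $\mathfrak{B}$. Consequently, if $\lambda$ is an eigenvalue then so is $\bar\lambda$ with eigenfunction $\mathcal{T}(u_x,u_y)$: the spectrum is symmetric about the real axis. This is the ``$PT$-type'' symmetry with respect to the $z$ axis alluded to in the Remark, and it can be recast as $J$-self-adjointness of the associated operator in the indefinite inner product coming from the even/odd (parity) grading. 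I would stress that this symmetry by itself does \emph{not} force reality: the term $-i\mathfrak{B}x$ is a genuine, complex-Airy-type, non-self-adjoint perturbation, and one checks that no positive weight turns the pencil into a self-adjoint operator, so conjugate-pairing is all one gets for free.

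To upgrade conjugate-pairing to reality I would argue perturbatively from $\mathfrak{B}=0$. There the operator is the self-adjoint $\mathcal{L}_{1,\alpha}$ of Lemma \ref{lem3.1}, with real discrete eigenvalues, each of which is geometrically simple in the reduced $(u_x,u_y)$ formulation (as noted after \eqref{newlinear}). Analyticity of the pencil in $\mathfrak{B}$ yields locally unique analytic branches $\lambda(\mathfrak{B})$ emanating from these simple real eigenvalues; since $\overline{\lambda(\mathfrak{B})}$ is also an eigenvalue and lies near the same real value $\lambda(0)$, local uniqueness forces $\overline{\lambda(\mathfrak{B})}=\lambda(\mathfrak{B})$, so each branch stays real. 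Together with the evenness in $\mathfrak{B}$ this simultaneously delivers the expansion $\lambda=\lambda(0)+O(\mathfrak{B}^2)$ exploited later (the $a_4(\beta\mathfrak{R})^2$ term).

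The main obstacle is precisely this last step: reality is not a formal consequence of the symmetry, and for large $\mathfrak{B}$ one would in principle have to exclude collisions of branches at which a conjugate pair could split off the real axis. In the regime actually used ($\mathfrak{B}$ small, $\alpha$ near $\alpha_c$, $T$ near $T_c$) the perturbative argument is clean and suffices for Lemma \ref{lem4.2} and the ensuing theorem; I would flag that the unconditional statement rests on the \emph{unbroken} character of the $\mathcal{T}$-symmetry, which is the genuinely remarkable content and the part I would expect to require the most care.
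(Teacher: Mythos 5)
Your proposal is correct and follows essentially the same route as the paper: the paper's proof also rests on exactly your two symmetries, namely complex conjugation (which sends $\mathfrak{B}$ to $-\mathfrak{B}$ and $\lambda$ to $\overline{\lambda}$) and the reflection $(x,\mathfrak{B},u(x))\mapsto(-x,-\mathfrak{B},u(-x))$ (which preserves $\lambda$), combined to give $\lambda(\alpha,\mathfrak{B})=\lambda(\alpha,-\mathfrak{B})=\overline{\lambda}(\alpha,\mathfrak{B})$. The branch-identification step you spell out (simplicity of the reduced eigenvalue and analyticity in $(\alpha,\mathfrak{B})$, so that the conjugated/reflected eigenvalue must coincide with the original branch) is precisely what the paper sets up in the paragraph preceding the lemma and then uses implicitly, so your more explicit treatment of it is a refinement of, not a departure from, the paper's argument.
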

\begin{proof}
Thanks to the form of system (\ref{non1},\ref{non2}) 
we have the following symmetry properties for $\lambda(\alpha,\mathfrak{B})$:

\begin{itemize}

\item[i)] changing $\mathfrak{B}$ in $-\mathfrak{B}$ changes $%
\lambda $ in $\overline{\lambda },$ and for $%
\mathfrak{B}=0,$ $\lambda $ is real.

\item[ii)] Changing $(x,\mathfrak{B},u_{x}(x),u_{y}(x))$ into 
$(-x,-\mathfrak{B},u_{x}(-x),u_{y}(-x))$ shows that the same $\lambda$ is valid for the eigenvector $(u_{x}(-x),u_{y}(-x))$. Hence 
\[
\lambda(\alpha,\mathfrak{B})=\lambda(\alpha,-\mathfrak{B})=
\overline{\lambda}(\alpha,\mathfrak{B}).
\]
 \end{itemize}
Finally, the evenness property in $\alpha$ is obvious.   
\end{proof}

Let us give the Taylor expansion of the largest eigenvalue $\lambda_0 $ in the
neighborhood of $0,$ for $(\alpha ,T,\mathfrak{B})$ close to $(\alpha
_{c},T_{c},0)$. From the previous Lemma, we have
\begin{equation}\label{lambda_taylor}
    \begin{aligned}
\lambda_0 &= a_{2}(\alpha ^{2}-\alpha _{c}^{2})+a_{3}\tau
+a_{4} \mathfrak{B}^{2}
+a_{6}(\alpha^{2}-\alpha _{c}^{2})^{2} +a_7\mathfrak{B}^2(\alpha^{2}-\alpha _{c}^{2}) 
\\
& \quad + a_{9}\tau(\alpha^{2}-\alpha _{c}^{2})+a_{10}\rho^2 + \cdots,
\end{aligned}
\end{equation}
where all coefficients $a_{j}$ are real and
\[
\tau :=T-T_{c}.
\]
Note that $\alpha^2 - \alpha_c^2$ is at leading order proportional to $\alpha - \alpha_c$ since
$$
\alpha^2 - \alpha_c^2 = (\alpha - \alpha_c) (\alpha + \alpha_c)
= 2 \alpha_c (\alpha - \alpha_c) + \cO \Bigl( (\alpha - \alpha_c)^2 \Bigr).
$$

\begin{lemma}\label{lem4.2}
    We have
    $ a_2= 0$, $a_3 > 0$, 
    and we obtain numerically that $a_4< 0$,  $a_6 < 0$ . 
\end{lemma}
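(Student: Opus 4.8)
The plan is to read off the coefficients in (\ref{lambda_taylor}) by a Lyapunov--Schmidt/perturbation expansion around the degenerate base point $(\alpha,T,\mathfrak{B})=(\alpha_c,T_c,0)$. Setting $L=\alpha^2-D^2$ and eliminating $u_x=(\lambda+L-i\mathfrak{B}x)u_y$ in (\ref{non1})--(\ref{non2}) reduces the problem to the scalar pencil
\[
G(\lambda)w:=\big[(\lambda+L-i\mathfrak{B}x)\,L\,(\lambda+L-i\mathfrak{B}x)-T\alpha^2\big]w=0,\qquad w=u_y .
\]
Writing $L_c=\alpha_c^2-D^2$ and $G_0=L_c^{3}-T_c\alpha_c^{2}$, the base equation $G_0w_0=0$ is solved by the explicit eigenfunction $w_0$ of Lemma \ref{lem3.2}; since $T_c\alpha_c^{2}$ is the \emph{smallest} eigenvalue of $L_c^{3}$ (this is the first crossing as $T$ increases, by Lemma \ref{lem3.1}), one has $G_0\ge 0$ with $\ker G_0=\mathrm{span}\,w_0$. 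At $\mathfrak{B}=0$ the operator $\mathcal{L}_{1,\alpha}$ is self-adjoint for the weighted product of Lemma \ref{lem3.1}, so I would impose the associated Fredholm solvability conditions order by order and extract each $a_j$ from the requirement that the relevant right-hand side be orthogonal to the kernel.

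For $a_2$ and $a_6$ I would avoid any computation and argue directly on the neutral curve. At $\mathfrak{B}=0$, $\lambda_0=\lambda_0(\alpha^2,T)$ is the top eigenvalue of a self-adjoint operator and satisfies $\lambda_0(\alpha^2,T_c(\alpha^2))\equiv 0$. Differentiating once and using that $\alpha_c$ is an interior minimum of $T_c$, so $\tfrac{dT_c}{d\alpha^2}(\alpha_c^2)=0$, gives $a_2=\partial_{\alpha^2}\lambda_0=-a_3\,\tfrac{dT_c}{d\alpha^2}(\alpha_c^2)=0$. Differentiating a second time at $\alpha_c$ (where the first derivative of $T_c$ vanishes) yields
\[
a_6=-\tfrac12\,a_3\,\frac{d^2T_c}{d(\alpha^2)^2}(\alpha_c^2)<0,
\]
using $a_3>0$ together with the numerically verified strict minimum of $T_c$ at $\alpha_c$ (Remark \ref{rem even}); this is precisely the ``numerical'' content of $a_6<0$.

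For $a_3>0$ I would use the generalized Hellmann--Feynman identity for the self-adjoint problem at $\mathfrak{B}=0$. In the weighted product the pressure drops out by incompressibility, giving $\langle\mathcal{L}_{1,\alpha}u,u\rangle=-\langle L_cu_x,u_x\rangle-\langle L_cu_z,u_z\rangle-T\langle L_cu_y,u_y\rangle+2T\,\mathrm{Re}\langle u_x,u_y\rangle$, where $\langle L_cf,f\rangle=\|Df\|^2+\alpha_c^2\|f\|^2$. Differentiating the Rayleigh quotient in $T$ at the critical eigenvector and using $\lambda_0=0$ at $T_c$ to eliminate the cross term $2\,\mathrm{Re}\langle u_x,u_y\rangle$ leaves the manifestly positive value
\[
a_3=\frac{\langle L_cu_x,u_x\rangle+\langle L_cu_z,u_z\rangle}{T_c\big(\|u_x\|^2+T_c\|u_y\|^2+\|u_z\|^2\big)}>0 .
\]

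The coefficient $a_4$ is the genuine obstacle and the only place where numerics is unavoidable. Because $\lambda_0$ is even in $\mathfrak{B}$ (Lemma \ref{lem4.1}) the first-order term vanishes; I would confirm this from parity, the relevant integrand $x\,(|u_x|^2+T|u_y|^2+|u_z|^2)$ being odd. At second order, expanding $G(\lambda)w=0$ with $\lambda=a_4\mathfrak{B}^2+\cdots$ and $w=w_0+i\mathfrak{B}\,\widetilde w_1+\cdots$, the first-order corrector solves $G_0\widetilde w_1=(L_c^{2}x+xL_c^{2})w_0$ with $\widetilde w_1\perp w_0$ (and $\widetilde w_1$ odd), and the second-order solvability condition gives, in the pairing inherited from Lemma \ref{lem3.1},
\[
a_4=\frac{\langle xw_0,\,L_c(xw_0)\rangle-\langle G_0\widetilde w_1,\widetilde w_1\rangle}{2\,\|L_cw_0\|^2}.
\]
Here both numerator terms are \emph{positive} (using $G_0\ge 0$), so no general principle fixes the sign. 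The clean ``level-repulsion'' argument that would otherwise force $a_4<0$ fails precisely because the divergence-free constraint itself depends on $\mathfrak{B}$: after eliminating $u_z$ and $p$, the $\mathfrak{B}$-perturbation of the reduced operator appears as $L_c^{-1}xL_c$ rather than as the skew multiplication $x$, and is therefore not skew-Hermitian, the defect being the commutator $[L_c,x]=2D$. One is thus reduced to evaluating the two integrals with the explicit $w_0$ and $\widetilde w_1=G_0^{-1}(L_c^{2}x+xL_c^{2})w_0$; doing so numerically yields $a_4<0$. I expect this resolvent computation, together with the careful bookkeeping of the non-self-adjoint correction, to be the main difficulty.
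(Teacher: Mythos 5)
Your treatment of $a_2$, $a_3$, $a_6$ is correct. The neutral-curve differentiation giving $a_2=0$ and $a_6=-\tfrac12 a_3\,d^2T_c/d(\alpha^2)^2<0$ is exactly the paper's argument (the paper phrases it in the variable $\alpha$, obtaining $0=a_3\,\partial_\alpha^2\tau(\alpha_c,0)+8\alpha_c^2a_6$, which is the same statement), and both rest on the same numerically verified strict convexity of the neutral curve. For $a_3>0$ you take a genuinely different, and valid, route: instead of the paper's reduced two-component operator $L_0$ combined with the identity (\ref{a2=0}), you differentiate the Rayleigh quotient of the three-component operator $\mathcal{L}_{1,\alpha}$ in $T$ and use $\langle\mathcal{L}_{1,\alpha}u^0,u^0\rangle=0$ at criticality to eliminate the cross term; I checked that the $\langle L_c u_y,u_y\rangle$ contributions cancel and your formula $a_3=\bigl(\langle L_cu_x,u_x\rangle+\langle L_cu_z,u_z\rangle\bigr)/\bigl(T_c(\|u_x\|^2+T_c\|u_y\|^2+\|u_z\|^2)\bigr)$ is correct. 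What the paper's route buys is the identity (\ref{a2=0}) itself, which it reuses; what yours buys is independence from the expansion bookkeeping.

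The $a_4$ part has a genuine gap. Your scalar reduction $G(\lambda)w=0$, $w=u_y$, destroys the self-adjoint structure that makes the perturbation expansion legitimate. Concretely: with the induced boundary conditions $w=L_cw=DL_cw=0$, the operator $G_0=L_c^3-T_c\alpha_c^2$ is \emph{not} symmetric in $L^2$ (integration by parts leaves the boundary terms $[L_c^2u\,D\bar v-Du\,\overline{L_c^2v}]_{-1/2}^{1/2}$, which do not vanish on this domain). Translating back to the system form shows that $G_0w=f$ is solvable iff $\langle f,L_cw_0\rangle_{L^2}=0$, i.e.\ the cokernel is spanned by $L_cw_0=u_x^0$, \emph{not} by $w_0$. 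Hence ``$G_0\ge0$ with $\ker G_0=\mathrm{span}\,w_0$'' is not meaningful, your order-$\mathfrak{B}^2$ solvability condition is taken against the wrong function (only the order-$\mathfrak{B}$ condition survives, by parity), and the positivity you assert for the two terms in your numerator is unfounded. A second defect: the boundary conditions of the scalar problem are themselves $\lambda$- and $\mathfrak{B}$-dependent, since they read $(\lambda+L_c-i\mathfrak{B}x)w=0$ and $D[(\lambda+L_c-i\mathfrak{B}x)w]=0$ at $x=\pm1/2$; consequently your corrector satisfies the \emph{inhomogeneous} condition $DL_c\tilde w_1=x\,Dw_0$ at $x=\pm1/2$ (and $Dw_0$ does not vanish there), which your formula ignores. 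Both defects change the value of the expression, so evaluating it numerically would not compute the $a_4$ of the lemma. The paper avoids all of this by staying with the two-component system (\ref{eigensystem}), whose boundary conditions $u_x=Du_x=u_y=0$ are parameter-independent, whose operator $L_0$ is genuinely self-adjoint for the weighted product $\int(u_x\bar v_x+\alpha_c^2T_cu_y\bar v_y)\,dx$, and which is \emph{linear} in both $\lambda$ and $\mathfrak{B}$; the solvability conditions against $\zeta$ then give the correct formula $a_4\langle H_0\zeta,\zeta\rangle=\langle L_{100}U^{(100)},\zeta\rangle$ with $L_0U^{(100)}=-L_{100}\zeta$, to be evaluated numerically, or alternatively the sign of $a_4$ is read off the computed neutral surface $T_c(\alpha,\mathfrak{B})$ in Appendix \ref{Appendix22}. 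You should redo your second-order computation in that framework.
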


\begin{proof} 
Since $\tau =0$ corresponds to
the strict minimum of $T$ at $\alpha =\alpha _{c}$ when $\fB = 0$,  we have
\begin{align}
    \frac{\partial \tau}{\partial \alpha}(\alpha_c , 0) &= 0, \label{tau_alpha_first}\\
    \frac{\partial^2 \tau}{\partial \alpha^2}(\alpha_c , 0) &> 0. \label{tau_alpha_second}
\end{align}
Fixing $\lambda_0 = \fB = 0$ in \eqref{lambda_taylor}, taking derivative with respect to $\alpha$ and evaluating at $\alpha =\alpha _{c},$ \eqref{tau_alpha_first} implies $a_2 = 0$.

Next we prove that the coefficient $a_{3}$ is positive. Let us denote by $%
L_{0}$ the linear operator defined as%
\[
L_{0}U=\left( 
\begin{array}{c}
-(\alpha _{c}^{2}-D^{2})^{2}u_{x}+T_{c}\alpha _{c}^{2}u_{y} \\ 
(D^{2}-\alpha _{c}^{2})u_{y}+u_{x}%
\end{array}%
\right) ,
\]%
where 
\[
U=(u_{x},u_{y})^{t},u_{x}=Du_{x}=u_{y}=0,x=\pm 1/2.
\]%
Now the eigenvector of $L_{0}$ for the eigenvalue $0$ is%
\[
\zeta =(u_{x}^{0},u_{y}^{0})^{t},
\]%
so that%
\[
L_{0}\zeta =0.
\]%
Moreover, 
it is seen in Lemma \ref{lem3.2} (see also Remark \ref{rem even}) that $u_{x}^{0},u_{y}^{0}$ are even in $x.$ The
system (\ref{non1}), (\ref{non2}) may be written as%
\begin{eqnarray}
\lambda_0 (H_{0}+\widehat{\alpha }H_{010})U &=&L_{0}U+\mathfrak{B}L_{100}U+%
\widehat{\alpha }L_{010}U+\tau L_{001}U+  \nonumber \\
&&+\mathfrak{B}\widehat{\alpha }L_{110}U+\widehat{\alpha }\tau L_{011}U+%
\widehat{\alpha }^{2}L_{020}U,  \label{eigensystem}
\end{eqnarray}%
with $\widehat{\alpha }=\alpha ^{2}-\alpha _{c}^{2}$ and%
\[
H_{0}U=\left( 
\begin{array}{c}
(\alpha _{c}^{2}-D^{2})u_{x} \\ 
u_{y}%
\end{array}%
\right) ,H_{010}U=\left( 
\begin{array}{c}
u_{x} \\ 
0%
\end{array}%
\right) ,
\]%
\[
L_{100}U=\left( 
\begin{array}{c}
ix(\alpha _{c}^{2}-D^{2})u_{x} \\ 
ixu_{y}%
\end{array}%
\right) ,L_{010}U=\left( 
\begin{array}{c}
-2(\alpha _{c}^{2}-D^{2})u_{x}+T_{c}u_{y} \\ 
-u_{y}%
\end{array}%
\right) ,
\]%
\[
L_{001}U=\left( 
\begin{array}{c}
\alpha _{c}^{2}u_{y} \\ 
0%
\end{array}%
\right) ,L_{110}U=\left( 
\begin{array}{c}
ixu_{x} \\ 
0%
\end{array}%
\right) ,L_{011}U=\left( 
\begin{array}{c}
u_{y} \\ 
0%
\end{array}%
\right) ,L_{020}=\left( 
\begin{array}{c}
-u_{x} \\ 
0%
\end{array}%
\right) .
\]%
As for the proof of selfadjointness of $\mathcal{L}_{1,\alpha },$ we may
prove that $L_{0}$ is selfadjoint in $[L^{2}(-1/2,1/2)]^{2}$ with the scalar
product%
\[
\langle U,V\rangle =\int_{-1/2}^{1/2}(u_{x}\overline{v_{x}}+\alpha
_{c}^{2}T_{c}u_{y}\overline{v_{y}})dx,
\]%
so that%
\[
\langle L_{0}U,V\rangle =\langle U,L_{0}V\rangle,\text{   for any }U,V\in dom(L_{0}).
\]%
It results that $\zeta $ is orthogonal to the range of $L_{0}.$ In the
following computation, we use%
\[
\langle H_{0}U,U\rangle =\int_{-1/2}^{1/2}(|Du_{x}|^{2}+\alpha
_{c}^{2}|u_{x}|^{2}+\alpha _{c}^{2}T_{c}|u_{y}|^{2})dx>0
\]%
for $U\neq 0.$ We also expand the eigenvector $U$ as%
\[
U=\zeta +\mathfrak{B}U^{(100)}+\widehat{\alpha }U^{(010)}+\tau U^{(001)}+%
\mathfrak{B}\widehat{\alpha }U^{(110)}+\widehat{\alpha }\tau U^{(011)}+%
\widehat{\alpha }^{2}U^{(020)}+\mathfrak{B}^{2}U^{(200)}+...
\]%
and identify the coefficients of $\widehat{\alpha },\tau ,\mathfrak{B,B}^{2}$
in (\ref{eigensystem}) where $\lambda $ is expanded in (\ref{lambda_taylor}):%
\begin{eqnarray*}
a_{2}H_{0}\zeta  &=&L_{0}U^{(010)}+L_{010}\zeta , \\
a_{3}H_{0}\zeta  &=&L_{0}U^{(001)}+L_{001}\zeta , \\
0 &=&L_{0}U^{(100)}+L_{100}\zeta , \\
a_{4}H_{0}\zeta  &=&L_{0}U^{(200)}+L_{100}U^{(100)}.
\end{eqnarray*}%
By construction of $\alpha _{c},T_{c}$ we have $a_{2}=0,$ hence%
\[
\langle L_{010}\zeta ,\zeta \rangle =0,
\]%
i.e.%
\begin{equation}
-\int_{-1/2}^{1/2}(2|Du_{x}^{0}|^{2}+2\alpha
_{c}^{2}|u_{x}^{0}|^{2}+\alpha
_{c}^{2}T_{c}|u_{y}^{0}|^{2})dx+%
\int_{-1/2}^{1/2}T_{c}u_{y}^{0}u_{x}^{0}dx=0  \label{a2=0}
\end{equation}%
We also obtain%
\[
a_{3}\langle H_{0}\zeta ,\zeta \rangle =\langle L_{001}\zeta ,\zeta \rangle
=\alpha _{c}^{2}\int_{-1/2}^{1/2}u_{y}^{0}u_{x}^{0}dx
\]%
hence, from (\ref{a2=0}) and $\langle H_{0}\zeta ,\zeta \rangle >0,$ we
deduce that%
\[
a_{3}>0.
\]%
The equation for $U^{(100)}$ is valid since $L_{100}\zeta $ has odd
components in $x,$ hence orthogonal to $\zeta $ which has even components.
Now we obtain%
\[
a_{4}\langle H_{0}\zeta ,\zeta \rangle =\langle L_{100}U^{(100)},\zeta
\rangle 
\]%
which needs to be computed numerically, at least to check that $a_{4}<0$. 

Fixing $\lambda_0 = \fB = 0$ in \eqref{lambda_taylor} again, and taking second derivative with respect to $\alpha$ and evaluating at $\alpha =\alpha _{c},$ we have
$$0 = a_3 \frac{\partial^2 \tau}{\partial \alpha^2}(\alpha_c , 0) + 8 \alpha_c^2 a_6.$$
Since $a_3 > 0$ and $\tau$ has a strict minimum in $\alpha_c$, this implies (numerically) $a_6 < 0$ by \eqref{tau_alpha_second}.

Finally, $a_4 < 0$ follows from the numerical evidence  (see Appendix \ref{Appendix22}) that for $\lambda_0 = 0$ and fixed $\alpha$, the critical $T$ is an increasing convex function with respect to $\fB > 0$.

In conclusion, the principal part of $\lambda $ becomes
\begin{equation}
\lambda_0 =a_{3}\tau +a_{4}\mathfrak{B}^{2}+a_{6}(\alpha^{2}-\alpha _{c}^{2})^{2}+a_{7}\mathfrak{B}^{2}(\alpha^{2}-\alpha _{c}^{2})
+ a_{9}\tau(\alpha^{2}-\alpha _{c}^{2})+a_{10}\tau^2
\label{equ dispersion}
\end{equation}
where $a_{j}$ are real. 
\end{proof}


\subsection{Study of the critical Taylor number}


Let us now study how $T_c$ depends on $\alpha$ and $\mathfrak{B}$.
Let
$$
b_4 = - a_4 >0, \qquad b_6 = - a_6 > 0
$$
where the strict signs of $a_4$ and $a_6$ come from numerical computation (see Appendices \ref{Appendix21}, and \ref{Appendix22}).
From the expression (\ref{equ dispersion}) of $\lambda$, we deduce that the critical value of $T$ for $\alpha\ne \alpha_{c}$ is
approximately reached for
\begin{equation}
\alpha^2-\alpha_{c}^2 \approx \Bigl[\frac{a_7 }{2b_6}+\frac{a_{9}b_{4}}{2b_{6}a_{3}} \Bigr]\mathfrak{B}^2
\end{equation}
and 
\[
T \approx T_{c}+\frac{b_4}{a_3}\mathfrak{B}^2- \frac{1}{4b_{6}a_{3}^3} \Bigl[(a_{3}a_{7}+a_{9}b_{4})^2+4a_{10}b_{6}a_{4}^2 \Bigr]\mathfrak{B}^4
 >T_{c}.
\]


\section{The Ginzburg Landau equation}


\subsection{Limit of Navier-Stokes system}

From now on we are dealing with the limit system
\begin{eqnarray}
(\partial _{t}-\Delta _{\bot })u_{\bot }+\nabla _{\bot }p &=&\mathfrak{R}%
x\partial _{y}u_{\bot }+(T\widehat{u}_{y},0)^{t}-(u_{\bot}\cdot\nabla_{\bot})u_{\bot},  \label{newnonlinear} \\
(\partial _{t}-\Delta _{\bot })\widehat{u}_{y} &=&\mathfrak{R}x\partial _{y}%
\widehat{u}_{y}+u_{x}-(u_{\bot}\cdot\nabla_{\bot})\widehat{u}_{y},  \nonumber \\
\nabla_{\bot} \cdot u_{\bot }+\mathfrak{R}\partial _{y}\widehat{u}_{y} &=&0,  \nonumber
\end{eqnarray}%
with $2\pi /\alpha_c $ periodicity in $z$ and boundary conditions%
\[
u_{\bot }=\widehat{u}_{y}=0,x=\pm 1/2.
\]%
Here the index $\bot $ means $(x,z)$ components. The system (\ref{newnonlinear}) is derived from the system in section \ref{sec2} by rescaling of $u_y$ and dropping the small terms such as $\mathfrak{R}^{-1} \partial_yp$, as well as terms with factors of order $(1-\eta)$ and $(1-\mu)$. Moreover, since we are looking for slowly varying functions of $y$, we also suppress $\partial_{y}^2$ in linear terms. Finally, because we seek solutions close to $0$, we suppress quadratic terms involving the operator $\mathfrak{R}\partial_y$, while keeping such terms in the linear part. The linear part of \eqref{newnonlinear} is studied in subsection \ref{explambda}.


\subsection{The amplitude equation}


Heuristically, the solutions of the limit Navier-Stokes equations (\ref{newnonlinear}) can be described by the solutions of the
following Ginzburg-Landau equation
\begin{equation}
\frac{\partial  A}{\partial t}=a_{3}\tau A+b_{4}\mathfrak{R}^2 \frac{\partial^2 A}{\partial y^2} -cA|A|^2, \label{GLequ}
\end{equation}
where the coefficients $a_3$, $a_4 = -b_4$ and $c$ are computed in Appendices \ref{coefLandauequ} and \ref{compGLequ}.

We do not attempt to justify this assertion in the general time-dependent case, and instead restrict our attention to steady solutions.
The mathematical justification for the search of steady solutions comes back to the spatial dynamics theory (see \cite{Iooss2}), where the coordinate $y$ plays the role of time. In this setup, we look for solutions (now independent of time) that are small and bounded for $y\in \mathbb{R}$. This would give a 4th-order ODE for $A(y)\in \mathbb{C}$, which coincides with the equation derived below for the principal part of steady solutions.

In particular, small-amplitude steady solutions of the initial Navier-Stokes system are
described by small-amplitude steady solutions of the Ginzburg-Landau equation.
We do not prove this assertion in this article, since it is a direct but lengthy application of the 
techniques developed in \cite{Iooss2}. Accordingly,
the search of time-independent solutions of the Navier-Stokes can thus be reduced to the search of
time-independent solutions of the Ginzburg-Landau equation, which is an easier task that we now detail.

We note that, for any $\theta \in \mathbb{R}$, 
$$
A(y) = \sqrt{a_3 \tau \over c} e^{i \theta}
$$
is a stationary solution of (\ref{GLequ}), which corresponds to the Taylor vortex flows.

There exists another family of solutions of (\ref{GLequ}) which are under the form
$$
\rho e^{i(\alpha_c z +\beta y)}+ c.c.,
$$
where
$$
\rho^2 =\frac{a_3}{c}\tau-\frac{b_4}{c}\beta^2\mathfrak{R}^2.
$$
These solutions corresponds to wavy vortices. 

The secondary bifurcation to such solutions occurs for (recall that $T=2\omega \mathfrak{R}$)
\[
\omega=\omega_c+\frac{b_4}{2a_3}\frac{\mathfrak{B}^2}{\mathfrak{R}}.
\]
This leads in particular to a secondary bifurcation branching from Couette flow (see Figure \ref{diagram}), under the form of steady wavy vortices 
(defined up to a shift in $z,y$).

The branches of steady solutions obtained for $A=\rho e ^{i\beta y}$ with $\beta\ne0$ do not meet each others, since they are parallel to the TVF branch (see Figure \ref{diagram}). Notice that these wavy vortices are steady in the rotating frame (rotation rate $\Omega_{rf}$), hence they would correspond to time periodic wavy vortices for an observer.
\begin{figure}[th]
\begin{center}
\includegraphics[width=4cm]{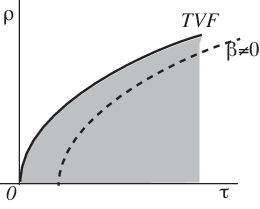}
\end{center}
\caption{TVF is given by $a_{3}\tau=c \rho^2$. Any point $(\tau,\rho)$ in the grey region corresponds to wavy vortices with $A=\rho e^{i\beta y}$ such that $b_{4}\mathfrak{R}^2 \beta^2=a_{3}\tau-c\rho^2$. These solutions correspond to the boundary of the bounded region described in Figure \ref{caseKnon0}.}
\label{diagram}
\end{figure}


\subsection{Stability of bifurcating steady wavy vortices}


Let us start with the Ginzburg-Landau equation (\ref{GLequ}) ruling the principal part of
bifurcating solutions 
and study the stability of the steady solution%
\begin{eqnarray}
A &=&A_{0}e^{i\beta _{0}y},  \label{wavy vortices} \\
0 &=&a_{3}\tau -\beta _{0}^{2}b_{4}\mathfrak{R}^{2}-c|A_{0}|^{2} \nonumber
\end{eqnarray}%
with respect to the time dependent Ginzburg-Landau equation.

Let us set%
\[
A=A_{0}e^{i\beta _{0}y}+A^{\prime },
\]%
and consider the linearized system for $(A^{\prime },\overline{A^{\prime }}):
$%
\[
\frac{\partial A^{\prime }}{\partial t}=a_{3}\tau A^{\prime }+b_{4}\mathfrak{%
R}^{2}\frac{\partial ^{2}A^{\prime }}{\partial y^{2}}-2c|A_{0}|^{2}A^{\prime
}-cA_{0}^{2}\overline{A^{\prime }}.
\]%
We only study the stability with respect to perturbations of the form $%
A^{\prime }=|A^{\prime }|e^{i\beta ^{\prime }y}$ where $\beta ^{\prime }$ is
different from $\beta _{0}$ in general. Then we have to discuss the
eigenvalues of%
\[
\left( 
\begin{array}{cc}
b_{4}\mathfrak{R}^{2}(\beta _{0}^{2}-\beta ^{\prime 2})-c|A_{0}|^{2} & 
-cA_{0}^{2} \\ 
-c\overline{A_{0}}^{2} & b_{4}\mathfrak{R}^{2}(\beta _{0}^{2}-\beta ^{\prime
2})-c|A_{0}|^{2}%
\end{array}%
\right) ,
\]%
where we used%
\[
a_{3}\tau -\beta ^{\prime 2}b_{4}\mathfrak{R}^{2}-2c|A_{0}|^{2}=b_{4}%
\mathfrak{R}^{2}(\beta _{0}^{2}-\beta ^{\prime 2})-c|A_{0}|^{2}.
\]%
The eigenvalues are%
\begin{eqnarray*}
&&b_{4}\mathfrak{R}^{2}(\beta _{0}^{2}-\beta ^{\prime 2}), \\
&&b_{4}\mathfrak{R}^{2}(\beta _{0}^{2}-\beta ^{\prime 2})-2c|A_{0}|^{2}.
\end{eqnarray*}%
It results that the steady solution (\ref{wavy vortices}) is stable for $%
|\beta ^{\prime }|>|\beta _{0}|,$ and unstable for $|\beta ^{\prime
}|<|\beta _{0}|.$

Notice that TVF corresponds to $\beta _{0}=0,$ so that it is stable with
respect to these perturbations, periodic in y, while for the wavy vortices
with $\beta _{0}\neq 0,$ they are unstable with respect to perturbations
such that $0\leq |\beta ^{\prime }|<|\beta |.$

We expect that the genuine solutions of the Navier-Stokes equations have the same stability properties
as the corresponding solutions of the Ginzburg-Landau equation.


\subsection{Stationary solutions of the amplitude equation}


We now study all the stationary solutions of the time-independent Ginzburg-Landau equation.
We recall that the principal part of a steady velocity vector field is%
\[
A(y)e^{i\alpha_c z}\widehat{U}(x)+\overline{A}(y)e^{-i\alpha_c z}\overline{%
\widehat{U}}(x)
\]%
where $A(y)\in \mathbb{C}$, $y\in \mathbb{R}$ 
and $\widehat{U}(x)$ is the eigenvector $(u_{x}^{0},u_{y}^{0},u_{z}^{0})(x)$
defined in previous section. Let us now look for solutions $A(y)$ of%
\[
a_{3}\tau A+b_{4}\mathfrak{R}^{2}\frac{\partial ^{2}A}{\partial y^{2}}%
-cA|A|^{2}=0,
\]%
which are small and bounded, when $\tau$ is small. We claim that
this (reversible) system is integrable.

First we change the scale in $y,$ and define the new positive coefficients%
\begin{equation}
\mathfrak{R}\widetilde{y} = y, \qquad
c_{0} = \frac{c}{2b_{4}}, \qquad a_{0}=\frac{a_{3}}{b_{4}}.
\end{equation}%
Then the equation reads (suppressing the tilde)%
\begin{equation}
\frac{\partial ^{2}A}{\partial y^{2}}=-a_{0}\tau A+2c_{0}A|A|^{2}.
\label{steadyamplitude}
\end{equation}%
Let us set%
\[
A(y)=\rho(y) e^{i\theta(y)},
\]%
then%
\begin{eqnarray*}
\rho ^{\prime \prime }-\rho \theta ^{\prime 2} &=&2c_{0}\rho ^{3}-a_{0}\tau
\rho , \\
2\rho ^{\prime }\theta ^{\prime }+\rho \theta ^{\prime \prime } &=&0,
\end{eqnarray*}%
hence%
\[
\rho ^{2}\theta ^{\prime }=K,
\]%
\[
\rho ^{\prime \prime }=\frac{K^{2}}{\rho ^{3}}+2c_{0}\rho ^{3}-a_{0}\tau \rho,
\]%
where $K$ is some constant.
Now defining%
\[
u(\rho )=\rho ^{\prime 2},
\]%
we finally obtain%
\[
u^{\prime }(\rho )=\frac{2K^{2}}{\rho ^{3}}+4c_{0}\rho ^{3}-2a_{0}\tau \rho 
\]%
hence%
\[
u(\rho )=-\frac{K^{2}}{\rho ^{2}}+c_{0}\rho ^{4}-a_{0}\tau \rho ^{2}+H,
\]%
where $H$ is some other constant. It remains to look for bounded solutions of the
first order differential equation%
\[
\rho ^{\prime 2}=-\frac{K^{2}}{\rho ^{2}}+c_{0}\rho ^{4}-a_{0}\tau \rho
^{2}+H.
\]%
The cases with $\theta ^{\prime }=\beta \mathfrak{R}=const$ \ and $\rho
=const$ give%
\[
-\frac{K^{2}}{\rho ^{2}}+c_{0}\rho ^{4}-a_{0}\tau \rho ^{2}+H=0
\]%
and correspond to periodic solutions $A(y)=\rho e^{i\beta y}.$ 

There are many other solutions, and we are interested in solutions such that as $y$
tends to $\pm \infty ,$ $A(y)$ goes to the same bounded limit (small for $%
\mu $ small) in such a way that this will respect the azimuthal periodicity,
since $y$ replaces $\varphi .$

Let us first study the case $K=0,$ where we may take $A$ real. The system (%
\ref{steadyamplitude}) leads to%
\[
A^{\prime 2}=-a_{0}\tau A^{2}+c_{0}A^{4}+H,
\]%
and we can describe the phase portrait of the solutions in the plane $%
(A,A^{\prime })$ as indicated in Figure \ref{caseK=0}. For 
\[
0<H<\frac{(a_{0}\tau )^{2}}{4c_{0}},
\]%
there is a family of closed orbits, surrounding $0$ corresponding to
periodic solutions $\rho (y),$ of period running from $\frac{2\pi }{\sqrt{%
a_{0}\tau }}$ (multiplied by $\mathfrak{R}$ in the original scaling), to $%
\infty .$ All these solutions are eligible for our problem. The limit case
where $H=\frac{(a_{0}\tau )^{2}}{4c_{0}}$ leads to 
\[
A^{\prime 2}=c_{0} \Bigl( A^{2}-\frac{a_{0}\tau }{2c_{0}} \Bigr)^{2}
\]%
which gives a heteroclinic orbit connecting the equilibrium points $A=\pm 
\sqrt{\frac{a_{0}\tau }{2c_{0}}}$. 
These points correspond to the Taylor Vortex Flow solution  (TVF).
In this case, the phase $\theta$ is constant, and at each infinity the limits for $A$ are different, corresponding to a vertical shift of TVF by $\pi/\alpha_c$. We may consider the juxtaposition along the $y$ coordinate of this solution with the symmetric one, then recovering the periodicity in the azimuthal coordinate.

Very close to this heteroclinic we have periodic solutions with very long periods, staying  very close to TVF flow for long intervals in $y$. These solutions are clearly eligible.

\begin{figure}[th]
\begin{center}
\includegraphics[width=6cm]{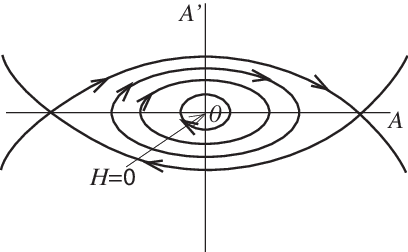}
\end{center}
\caption{Bounded solutions for K=0}
\label{caseK=0}
\end{figure}

Let us now consider the case where $K\neq 0.$ Let us make a new scaling to simplify the computations
\begin{equation}
\rho   = k\widetilde{\rho }, \quad K=k\widetilde{K},
\quad c_{0}k^{4}=1, \quad a_{0}\tau k^{2} =\widetilde{\tau }.
\end{equation}%
Then, suppressing the tildes, we have 
\[
\rho ^{\prime 2}=-\frac{K^{2}}{\rho ^{2}}+\rho ^{4}-\tau \rho ^{2}+H,
\]%
The TVF equilibrium corresponds to $\rho^2=\tau/2$, $K=0$, $H=\tau^2/4$. 
We now study the function $f(X)$ defined for $X>0$ by
\[
f(X)=-\frac{K^{2}}{X}+X^{2}-\tau X+H,
\]%
and determine the region where $f \ge 0$. Since $f(X)\to -\infty$ as $X\to 0^+$ and $f(X)\to +\infty$ as $X\to +\infty$, the only interesting cases are when $f$
is not monotonous, taking the value $0$ for two different values of $X.$ We
show that this occurs for $(H,K)$ in a certain bounded region of the plane.
The limit cases are when the minimum or maximum of $f$ are on the $X$ axis, so that we
satisfy the system%
\[
f(X)=f^{\prime }(X)=0.
\]%
This gives%
\begin{eqnarray*}
H &=&-3X^{2}+2\tau X \\
K^{2} &=&-2X^{3}+\tau X^{2},
\end{eqnarray*}%
which is the equation, in parametric form, of the curve in the $(H,K)$ plane
where $f(X)$ cancels in a
maximum or a minimum. We observe that we need to study
this curve from $X=0$ until $X=\tau /2$ (TVF flow) for which $K$ cancels (since $K^2$ should be positive), and then take
the symmetric curve through the $H$ axis. The point obtained for $X=\tau /3$
is singular, giving maximum values for $H$ and $K:(H,K)_{\max }=(\tau
^{2}/3,(\tau /3)^{3/2})$. The curve reaches the $H$ axis ($K=0$) for $H=0,$ and $%
H=\tau ^{2}/4$ (this corresponds to the origin and to the heteroclinic in the
plane $(A,A^{\prime })$ found for $K=0).$ We indicate on Figure \ref{caseKnon0} the
shapes of $f(X)$ all along the curve and the shapes of $f$ inside the region
delimited by the curve. 

For $\rho (y)$, the discussion is the same as for the
pendulum equation. Inside the region there is a family of periodic
solutions, depending on $(H,K),$ with amplitudes varying from $0$ for
the left boundary with lower values of $H,$ to a limit corresponding to a family
of homoclinic solutions at the right boundary with larger values of $H=H_{max}(K).$ 
\begin{figure}[th]
\begin{center}
\includegraphics[width=5cm]{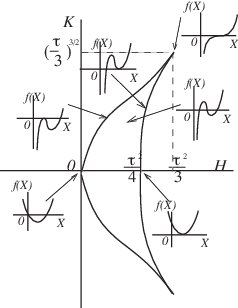}
\end{center}
\caption{$f(X)$ for $K\ne  0$.}
\label{caseKnon0}
\end{figure}
When $\rho $ (which stays $>0$) describes one of the homoclinics, this
corresponds to solutions with the argument $\theta (y)$ such that 
\[
\theta ^{\prime }(y)\underset{y\rightarrow \pm \infty }{\rightarrow }\frac{K%
}{\rho _{\infty }^{2}}=\beta ,
\]%
where the limit is exponential, hence%
\[
\theta (y)=\beta y+\theta _{0}+K\int_{0}^{y}\left( \frac{1}{\rho ^{2}(s)}-%
\frac{1}{\rho _{\infty }^{2}}\right) ds.
\]%
The condition for being an eligible solution (i.e. periodic in the azimuthal
coordinate) is that, there exists $n\in 
\mathbb{N}
$ such that%
\begin{equation}
K\int_{-\infty }^{\infty }\left( \frac{1}{\rho ^{2}(s)}-\frac{1}{\rho
_{\infty }^{2}}\right) ds=2n\pi .  \label{eligiblecond}
\end{equation}%
Since $\rho (y)\leq \rho _{\infty }$ depends on $(H_{\max }(K),K),$ the
condition (\ref{eligiblecond}) might be realized for a discrete set of
values of $K.$ For $K$ close to $0$, $\rho (y)$ stays nearly constant close
to TVF solution, except for a relatively small interval of values for $y$
(see Figure \ref{rhotheta}), so that the corresponding flow looks like a wavy vortex flow
with a small azimuthal modulation which has a little step near $y=0.$

For the periodic solutions $\rho (y)$ of period $T(H,K)$, we obtain%
\begin{eqnarray*}
\theta (y) &=&\beta ^{\prime }y+\theta _{0}+\phi (y), \\
\phi ^{\prime }(y) &=&\frac{K}{\rho ^{2}(s)}-\beta ^{\prime }, \\
\beta ^{\prime } &=&\frac{K}{T(H,K)}\int_{0}^{T(H,K)}\frac{dy}{\rho ^{2}(y)},
\end{eqnarray*}%
where $\phi $ has the same period $T(H,K)$ as $\rho .$ For solutions $\rho (y)$
close to some homoclinic, the amplitude stays nearly constant for long
intervals of $y$ and $\theta $ is the sum of $\beta ^{\prime }y$ plus
periodic bumps as indicated on Figure \ref{rhotheta}. 

 It should be noted that all steady solutions, obtained for $H=H_{min}$ or $H=H_{max}$ for (\ref{steadyamplitude}), correspond to bifurcating time periodic solutions of the Navier-Stokes equations, under the form of classical wavy vortices. Other solutions are new, more exotic and still time periodic in the frame of an observer.  

The persistence of the above solutions under perturbation terms of higher order comes from the study (not made here) with the help of spatial dynamics (where $y$ plays the role of time). Here we obtain a fourth-order ODE of the form (\ref{steadyamplitude}) with higher order terms respecting the symmetries $A\rightarrow Ae^{ih}$, $A\rightarrow \overline{A}$, and reversibility symmetry $(y,A(y))\rightarrow(-y,A(-y))$. This is analogous (however simpler) to what is done in \cite{Iooss2}, see also \cite{Haragus}  p.222. Here the linear situation corresponds to a quadruple $0$ eigenvalue with two $2 \times 2$ Jordan blocks where the principal part of the normal form is given by (\ref{steadyamplitude}). The periodic solutions we find, persist in using an implicit function argument in a suitable function space. The homoclinic solution also persists by the same type of geometric argument as developed in \cite{Haragus}  p.222. More delicate is the problem of persistence under perturbation of the solutions corresponding to the region of the parameter plane $(H,K)$ at the interior of the bounded area. We may observe that these solutions are such that, for a suitable $\beta$, the function $D(y)=A(y)e^{-i\beta y}$ is periodic, with the period of $\rho(y)$. Then the persistence proof applies for these periodic solutions (of a slightly modified equation), with a suitable $\beta$.
It results that the solutions we provide here are principal parts of solutions of the limit Navier-Stokes equations (\ref{newnonlinear}). 

These results are summed up in Theorem \ref{maintheo}.

\begin{figure}[th]
\includegraphics[width=5cm]{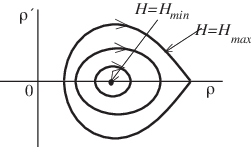}
\includegraphics[width=6cm]{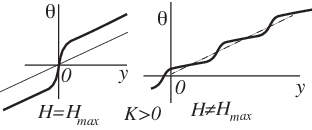}
\caption{Bounded solutions $\rho$  for $K>0$ and $\theta(y)$ for $H_{min}<H\leq H_{max}$.}
\label{rhotheta}
\end{figure}

\section{Appendix A: Some computations}
\subsection{Rescaled Navier-Stokes equations}\label{App1}

In this section, we provide detail computations for the derivation of \eqref{initial} in Section \ref{sec2}
and in particular we expand $\Delta_{cyl} u$, $r^{-2} u$, $U' + \Omega_{rot}$, $\Omega_{rot}$ in $\eta$ and $\mu$.

\begin{itemize}

\item Expansion of $\Delta_{cyl} u$: we have
$$
\Delta_{cyl} u = \Big(\frac{\nu}{d^3} \frac{\partial^2}{\partial \hat{x}^2}  
+ \frac{1}{\hat{x} d + \bar{r}} \frac{\nu}{d^2} \frac{\partial}{\partial \hat{x}} 
+ \frac{\nu}{d^3} \frac{\bar{r}^2}{(\hat{x} d + \bar{r})^2} \frac{\partial^2}{\partial \hat{y}^2} 
+ \frac{\nu}{d^3}\frac{\partial^2}{\partial \hat{z}^2} \Big) \hat{u}.
$$
We have
$$
\frac{d}{\hat{x} d + \bar{r}} = \frac{1-\eta}{\hat{x} (1 - \eta) + \frac{1+\eta}{2}} = O(1 - \eta),
$$
\begin{align*}
    \frac{\bar{r}^2}{(\hat{x} d + \bar{r})^2} 
    =& \frac{(\frac{1+\eta}{2})^2}{(\hat{x} (1 - \eta) + \frac{1+\eta}{2})^2}
    = 1 - \frac{\hat{x}(1-\eta) (\hat{x}(1-\eta) + (1+\eta))}{(\hat{x} (1 - \eta) + \frac{1+\eta}{2})^2}\\
    =& 1 + O(1 - \eta),
\end{align*}
therefore,
$$
\Delta_{cyl} u = \frac{\nu}{d^3} \Big( \frac{\partial^2}{\partial \hat{x}^2}  + \frac{\partial^2}{\partial \hat{y}^2} 
+\frac{\partial^2}{\partial \hat{z}^2} \Big) \hat{u} 
+ O(1 - \eta) \frac{\nu}{d^3} \Big( \frac{\partial \hat{u}}{\partial \hat{x}} + \frac{\partial^2 \hat{u}}{\partial \hat{y}^2}\Big).
$$

\item Expansion of $r^{-2} u$:

\begin{align*}
    \frac{u}{r^2} = \frac{\nu}{d^3} \frac{d^2}{(\hat{x} d + \bar{r})^2} \hat{u} = O(1-\eta)^2 \frac{\nu}{d^3} \hat{u}.
\end{align*}

\item Study of $U' + \Omega_{rot}$: 

{\begin{align*}
    U' + \Omega_{rot} =& 2 (A - \Omega_{rf})
    = 2 \omega_i \Big( \frac{\mu - \eta^2}{1- \eta^2} - \frac{1 + \mu}{2} \Big)
    = - \omega_i \frac{(1 - \mu)(1 + \eta^2)}{1 - \eta^2}.
\end{align*}}

\item Expansion of $\Omega_{rot}$:

\begin{align*}
     \Omega_{rot} =&  (A - \Omega_{rf}) + \frac{B}{r^2}
    = - \omega_i \frac{(1 - \mu)(1 + \eta^2)}{1 - \eta^2} + \omega_i \frac{1-\mu}{1-\eta^2} \frac{r_i^2}{r^2}\\
    =& - \omega_i \frac{1-\mu}{1-\eta^2} \Big( \frac{1 + \eta^2}{2} - \frac{r_i^2}{r^2} \Big).
\end{align*}
Note that
\begin{align*}
     \frac{r_i^2}{r^2} =& \frac{r_i^2}{(\hat{x} d + \bar{r})^2}
     =  \frac{\eta^2}{(\hat{x} (1 - \eta) + \frac{1+\eta}{2})^2}
     = \Big( \frac{\eta}{(\hat{x} + \frac{1}{2}) (1 - \eta) + \eta} \Big)^2\\
     =& \Big( 1 + \frac{1-\eta}{\eta} (\hat{x} + \frac{1}{2})  \Big)^{-2}\\
     =& 1 - 2  \Big(\frac{1-\eta}{\eta} (\hat{x} + \frac{1}{2})  \Big) + 3  \Big( \frac{1-\eta}{\eta} (\hat{x} + \frac{1}{2})  \Big)^2 + \cdots.
\end{align*}
Hence,
\begin{align*}
    \frac{1}{1-\eta^2} \Big( \frac{1 + \eta^2}{2} - \frac{r_i^2}{r^2} \Big) 
    =& \frac{1}{1-\eta^2} \Big[ \frac{\eta^2 - 1}{2} + 2  \Big(\frac{1-\eta}{\eta} (\hat{x} 
    + \frac{1}{2})  \Big) - 3  \Big( \frac{1-\eta}{\eta} (\hat{x} + \frac{1}{2})  \Big)^2 + \cdots \Big]\\
    =& - \frac{1}{2} + \frac{1}{\eta(1+\eta)}(2 \hat{x} + 1) + O(1 - \eta).
\end{align*}
As
$$\frac{2}{\eta(1+\eta)} - 1 = - \frac{(\eta+2)(\eta - 1)}{\eta (1+\eta)} = O(1-\eta),$$
we have
$$
\frac{1}{1-\eta^2} \Big( \frac{1 + \eta^2}{2} - \frac{r_i^2}{r^2} \Big) = \hat{x} + O(1 - \eta)
$$
and hence
$$
\Omega_{rot} = - \omega_i (1 - \mu)  \Bigl[ \hat{x} + O(1 - \eta)\Bigr].
$$
\end{itemize}

\subsection{Critical Taylor number in axi-symmetric case}\label{App2}
    
Solutions of (\ref{order6}) are linear combinations of $\exp(\lambda_j x)$, where
the $\Lambda=\lambda_j$ are the $6$ complex solutions of
\begin{equation} \label{firstinst4}
(\lambda + \alpha^2-\Lambda^2 )^2 ( \alpha^2 - \Lambda^2) = T \alpha^2
\end{equation}
provided this polynomial of degree $6$ has $6$ different roots. If some roots are equal,
then the solution involves functions of the form $x \exp(\lambda_j x)$. Note that by previous Lemma, $\sigma>\alpha^2$.

This equation is a polynomial of degree $3$ in $\Lambda^2$. It can thus be explicitly solved
and has three roots $\Lambda_k(\lambda,\alpha,T)$ for $1 \le k \le 3$, leading to the six roots
$\pm \Lambda_k^{1/2}(\lambda,\alpha,T)$ for (\ref{firstinst4}).
If these $6$ roots are different,  $u_x$ is of the form
$$
u_x(x) = \sum_{j=1}^6 \beta_j e^{\lambda_j x},
$$
The boundary conditions give
$$
\sum_{j=1}^6 \beta_j e^{\pm \lambda_j/2} = 0,
$$
$$
\sum_{j=1}^6 \beta_j \lambda_j e^{\pm \lambda_j/2} = 0,
$$
$$
\sum_{j=1}^6 \beta_j (\lambda+ \alpha^2 - \lambda_j^2) (\alpha^2 - \lambda_j^2) e^{\pm \lambda_j/2} = 0
$$
which is a $6 \times 6$ linear system.
Let $\mathfrak{D}(\lambda)$ be its determinant.
Then if $\mathfrak{D}(\lambda)$ vanishes and if all the roots $\lambda_j$ are different, $\lambda$ is an 
eigenvalue.

When $\lambda = 0$, equation (\ref{firstinst4}) gives 
$$
\Lambda ^{2} =\alpha ^{2}-\sigma (1)^{1/3}, \quad \sigma = (T\alpha ^{2})^{1/3}
$$
where $1^{1/3}$ stands for the three different cubic roots of $1$.
Thus we have 
\begin{eqnarray*}
\lambda _{1}^{2} &=&\alpha ^{2}-\sigma , \\
\lambda _{2}^{2} &=&\alpha ^{2}-\sigma j, \\
\lambda _{3}^{2} &=&\alpha ^{2}-\sigma j^{2},
\end{eqnarray*}%
where%
\[
j=-\frac{1}{2}+i\frac{\sqrt{3}}{2}.
\]%
One can see that $\lambda_1$, $\lambda_2$, and $\lambda_3$ are always different. Therefore,
an eigenvector $u_x$ should take the form 
\[
u_x =A_{1}e^{\lambda _{1}x}+B_{1}e^{-\lambda _{1}x}+A_{2}e^{\lambda
_{2}x}+B_{2}e^{-\lambda _{2}x}+A_{3}e^{\lambda _{3}x}+B_{3}e^{-\lambda _{3}x}
\]%
with%
\begin{align*}
\lambda _{1} &=ib_{1}, \\
\lambda _{2} &=a_{2}+ib_{2}, \\
\lambda _{3} &=a_{2}-ib_{2},
\end{align*}%
where $b_1$, $a_2$ and $b_2$ are given by (\ref{lambda_j}).

The boundary conditions are
$$
u_x (\pm 1/2) = Du_x (\pm 1/2) = (\alpha ^{2}-D^{2})^{2}u_x (\pm 1/2) =0.
$$
We may observe that%
\begin{eqnarray*}
(\alpha ^{2}-\lambda _{1}^{2})^{2} &=&\sigma ^{2}, \\
(\alpha ^{2}-\lambda _{2}^{2})^{2} &=&\sigma ^{2}j^{2}, \\
(\alpha ^{2}-\lambda _{3}^{2})^{2} &=&\sigma ^{2}j,
\end{eqnarray*}%
hence the boundary conditions become%
\begin{eqnarray*}
A_{1}e^{\lambda _{1}/2}+B_{1}e^{-\lambda _{1}/2}+A_{2}e^{\lambda
_{2}/2}+B_{2}e^{-\lambda _{2}/2}+A_{3}e^{\lambda _{3}/2}+B_{3}e^{-\lambda
_{3}/2} &=&0, \\
A_{1}e^{-\lambda _{1}/2}+B_{1}e^{\lambda _{1}/2}+A_{2}e^{-\lambda
_{2}/2}+B_{2}e^{\lambda _{2}/2}+A_{3}e^{-\lambda _{3}/2}+B_{3}e^{\lambda
_{3}/2} &=&0, \\
\lambda _{1}A_{1}e^{\lambda _{1}/2}-\lambda _{1}B_{1}e^{-\lambda
_{1}/2}+\lambda _{2}A_{2}e^{\lambda _{2}/2}-\lambda _{2}B_{2}e^{-\lambda
_{2}/2}+\lambda _{3}A_{3}e^{\lambda _{3}/2}-\lambda _{3}B_{3}e^{-\lambda
_{3}/2} &=&0, \\
\lambda _{1}A_{1}e^{-\lambda _{1}/2}-\lambda _{1}B_{1}e^{\lambda
_{1}/2}+\lambda _{2}A_{2}e^{-\lambda _{2}/2}-\lambda _{2}B_{2}e^{\lambda
_{2}/2}+\lambda _{3}A_{3}e^{-\lambda _{3}/2}-\lambda _{3}B_{3}e^{\lambda
_{3}/2} &=&0, \\
A_{1}e^{\lambda _{1}/2}+B_{1}e^{-\lambda _{1}/2}+j^{2}A_{2}e^{\lambda
_{2}/2}+j^{2}B_{2}e^{-\lambda _{2}/2}+jA_{3}e^{\lambda
_{3}/2}+jB_{3}e^{-\lambda _{3}/2} &=&0, \\
A_{1}e^{-\lambda _{1}/2}+B_{1}e^{\lambda _{1}/2}+j^{2}A_{2}e^{-\lambda
_{2}/2}+j^{2}B_{2}e^{\lambda _{2}/2}+jA_{3}e^{-\lambda
_{3}/2}+jB_{3}e^{\lambda _{3}/2} &=&0.
\end{eqnarray*}%
For obtaining a non trivial solution we need to verify that
\[
\Delta _{\alpha ,\tau }=\left\vert 
\begin{array}{cccccc}
e^{\lambda _{1}/2} & e^{\lambda _{2}/2} & e^{\lambda _{3}/2} & e^{-\lambda
_{1}/2} & e^{-\lambda _{2}/2} & e^{-\lambda _{3}/2} \\ 
\lambda _{1}e^{\lambda _{1}/2} & \lambda _{2}e^{\lambda _{2}/2} & \lambda
_{3}e^{\lambda _{3}/2} & -\lambda _{1}e^{-\lambda _{1}/2} & -\lambda
_{2}e^{-\lambda _{2}/2} & -\lambda _{3}e^{-\lambda _{3}/2} \\ 
e^{\lambda _{1}/2} & j^{2}e^{\lambda _{2}/2} & je^{\lambda _{3}/2} & 
e^{-\lambda _{1}/2} & j^{2}e^{-\lambda _{2}/2} & je^{-\lambda _{3}/2} \\ 
e^{-\lambda _{1}/2} & e^{-\lambda _{2}/2} & e^{-\lambda _{3}/2} & e^{\lambda
_{1}/2} & e^{\lambda _{2}/2} & e^{\lambda _{3}/2} \\ 
\lambda _{1}e^{-\lambda _{1}/2} & \lambda _{2}e^{-\lambda _{2}/2} & \lambda
_{3}e^{-\lambda _{3}/2} & -\lambda _{1}e^{\lambda _{1}/2} & -\lambda
_{2}e^{\lambda _{2}/2} & -\lambda _{3}e^{\lambda _{3}/2} \\ 
e^{-\lambda _{1}/2} & j^{2}e^{-\lambda _{2}/2} & je^{-\lambda _{3}/2} & 
e^{\lambda _{1}/2} & j^{2}e^{\lambda _{2}/2} & je^{\lambda _{3}/2}%
\end{array}%
\right\vert 
\]
vanishes. We observe that $\Delta _{\alpha ,\tau }$ is real since $\lambda _{3}=%
\overline{\lambda _{2}}$. 
Now, after reorganisation, we have
\[
\Delta _{\alpha ,\tau }=\left\vert 
\begin{array}{cccccc}
e^{\lambda _{1}/2} & e^{\lambda _{2}/2} & e^{\lambda _{3}/2} & e^{-\lambda
_{1}/2} & e^{-\lambda _{2}/2} & e^{-\lambda _{3}/2} \\ 
0 & (1-\frac{\lambda _{2}}{\lambda _{1}})e^{\lambda _{2}/2} & (1-\frac{%
\lambda _{3}}{\lambda _{1}})e^{\lambda _{3}/2} & 2e^{-\lambda _{1}/2} & (1+%
\frac{\lambda _{2}}{\lambda _{1}})e^{-\lambda _{2}/2} & (1+\frac{\lambda _{3}%
}{\lambda _{1}})e^{-\lambda _{3}/2} \\ 
0 & (1-j^{2})e^{\lambda _{2}/2} & (1-j)e^{\lambda _{3}/2} & 0 & 
(1-j^{2})e^{-\lambda _{2}/2} & (1-j)e^{-\lambda _{3}/2} \\ 
e^{-\lambda _{1}/2} & e^{-\lambda _{2}/2} & e^{-\lambda _{3}/2} & e^{\lambda
_{1}/2} & e^{\lambda _{2}/2} & e^{\lambda _{3}/2} \\ 
2e^{-\lambda _{1}/2} & (1+\frac{\lambda _{2}}{\lambda _{1}})e^{-\lambda
_{2}/2} & (1+\frac{\lambda _{3}}{\lambda _{1}})e^{-\lambda _{3}/2} & 0 & (1-%
\frac{\lambda _{2}}{\lambda _{1}})e^{\lambda _{2}/2} & (1-\frac{\lambda _{3}%
}{\lambda _{1}})e^{\lambda _{3}/2} \\ 
0 & (1-j^{2})e^{-\lambda _{2}/2} & (1-j)e^{-\lambda _{3}/2} & 0 & 
(1-j^{2})e^{\lambda _{2}/2} & (1-j)e^{\lambda _{3}/2}%
\end{array}%
\right\vert ,
\]%
which takes the form%
\[
\Delta _{\alpha ,\tau }=\left\vert 
\begin{array}{cc}
M & N \\ 
N & M%
\end{array}%
\right\vert 
\]%
with%
\begin{eqnarray*}
M &=&\left( 
\begin{array}{ccc}
e^{\lambda _{1}/2} & e^{\lambda _{2}/2} & e^{\lambda _{3}/2} \\ 
0 & (1-\frac{\lambda _{2}}{\lambda _{1}})e^{\lambda _{2}/2} & (1-\frac{%
\lambda _{3}}{\lambda _{1}})e^{\lambda _{3}/2} \\ 
0 & (1-j^{2})e^{\lambda _{2}/2} & (1-j)e^{\lambda _{3}/2}%
\end{array}%
\right) , \\
N &=&\left( 
\begin{array}{ccc}
e^{-\lambda _{1}/2} & e^{-\lambda _{2}/2} & e^{-\lambda _{3}/2} \\ 
2e^{-\lambda _{1}/2} & (1+\frac{\lambda _{2}}{\lambda _{1}})e^{-\lambda
_{2}/2} & (1+\frac{\lambda _{3}}{\lambda _{1}})e^{-\lambda _{3}/2} \\ 
0 & (1-j^{2})e^{-\lambda _{2}/2} & (1-j)e^{-\lambda _{3}/2}%
\end{array}%
\right) .
\end{eqnarray*}
We have
\begin{align*}
    \Delta _{\alpha ,\tau } &= 
    \begin{vmatrix}
        M+N & M+N\\
        N& M
    \end{vmatrix}
    = \begin{vmatrix}
        M+N & 0\\
        N& M-N
    \end{vmatrix}
= |M+N| |M-N|.
\end{align*}
Thus $\Delta_{\alpha,\tau} = 0$ if and only if $|M+N| = 0$ or $|M-N|=0$.

With the ``+" sign we obtain%
\[
\left\vert \left( 
\begin{array}{ccc}
\cos \frac{b_{1}}{2} & \cosh \frac{\lambda _{2}}{2} & \cosh \frac{\lambda
_{3}}{2} \\ 
e^{-ib_{1}/2} & \cosh \frac{\lambda _{2}}{2}-\frac{\lambda _{2}}{\lambda _{1}%
}\sinh \frac{\lambda _{2}}{2} & \cosh \frac{\lambda _{3}}{2}-\frac{\lambda
_{3}}{\lambda _{1}}\sinh \frac{\lambda _{3}}{2} \\ 
0 & (1-j^{2})\cosh \frac{\lambda _{2}}{2} & (1-j)\cosh \frac{\lambda _{3}}{2}%
\end{array}%
\right) \right\vert =0
\]%
i.e.%
\begin{equation}
\left\vert \left( 
\begin{array}{ccc}
\cos \frac{b_{1}}{2} & \cosh \frac{\lambda _{2}}{2} & \cosh \frac{\lambda
_{3}}{2} \\ 
-b_{1}\sin \frac{b_{1}}{2} & \lambda _{2}\sinh \frac{\lambda _{2}}{2} & 
\lambda _{3}\sinh \frac{\lambda _{3}}{2} \\ 
0 & -j\cosh \frac{\lambda _{2}}{2} & j^{2}\cosh \frac{\lambda _{3}}{2}%
\end{array}%
\right) \right\vert =0,  \label{equ +}
\end{equation}%
where 
\begin{eqnarray*}
\lambda _{2} &=&a_{2}+ib_{2}, \\
\lambda _{3} &=&a_{2}-ib_{2},
\end{eqnarray*}%
and where $a_{2},b_{1},b_{2}$ are functions of $\alpha ^{2}$ and $\sigma $ defined by (%
\ref{lambda_j}). A direct computation of this determinant leads to $\mathfrak{F}_1 = 0$.

With the  ``-" sign, we obtain%
\begin{equation}
\left\vert  
\begin{array}{ccc}
\sin \frac{b_{1}}{2} & \sinh \frac{\lambda _{2}}{2} & \sinh \frac{\lambda
_{3}}{2} \\ 
b_{1}\cos \frac{b_{1}}{2} & \lambda _{2}\cosh \frac{\lambda _{2}}{2} & 
\lambda _{3}\cosh \frac{\lambda _{3}}{2} \\ 
0 & -j\sinh \frac{\lambda _{2}}{2} & j^{2}\sinh \frac{\lambda _{3}}{2}%
\end{array}%
 \right\vert =0,  \label{equ -}
\end{equation}%
which leads to $\mathfrak{F}_2 = 0$.

The equation $\mathfrak{F}_1 = 0$ comes from
\[
|M+N|=0
\]
which corresponds to solutions such that 
\[
A_1=B_1,A_2=B_2,A_3=B_3
\]
in the eigenvector $u_x$. This means that in this case $u_x$ and $u_y$ are even in $x$. In the same way we prove that $\mathfrak{F}_2 = 0$ gives $u_x$ and $u_y$ odd in $x$. It is then clear that the solutions for $\mathfrak{F}_1 = 0$ and $\mathfrak{F}_2 = 0$ are distinct.

\subsection{Coefficients of the Landau equation}\label{coefLandauequ}

Let us consider the system (\ref{newnonlinear}) ruling the velocity field $U=(u_{\bot},\widehat{u}_y)$ independent of $y$,
then suppressing the hat, we obtain (below we detail the projection $\Pi$ on divergence free vector fields): 
\begin{eqnarray*}
\partial_t U &=&\Delta _{\bot }U-\nabla _{\bot }p+\left( 
\begin{array}{c}
Tu_{y} \\ 
u_{x} \\ 
0%
\end{array}%
\right) -(U_{\bot }\cdot \nabla _{\bot })U, \\
\nabla _{\bot }\cdot U_{\bot } &=&0,
\end{eqnarray*}%
where the subscipt $\bot $ means components $(x,z).$ The field $U$ is $2\pi
/\alpha _{c}$ periodic in $z,$ and satisfies the $0$ boundary conditions at $%
x=\pm 1/2$. Let us define operators%
\[
\mathbf{L}_{0}U=\Delta _{\bot }U-\nabla _{\bot }p_{0}+\left( 
\begin{array}{c}
T_{c}u_{y} \\ 
u_{x} \\ 
0%
\end{array}%
\right) ,
\]%
where $p_{0}$ is such that 
\[
\nabla _{\bot }\cdot (\mathbf{L}_{0}U)_{\bot }=0,\]
\[
(\mathbf{L}_{0}U)_{x}=0, \quad x=\pm1/2;
\]
and
\[
\mathbf{L}_{1}U=-\nabla _{\bot }p_{1}+\left( 
\begin{array}{c}
u_{y} \\ 
0 \\ 
0%
\end{array}%
\right) ,
\]%
with $p_{1}$ such that%
\[
\nabla _{\bot }\cdot (\mathbf{L}_{1}U)_{\bot }=0,
\]
\[
(\mathbf{L}_{1}U)_{x}=0,\quad x=\pm1/2;
\]
and
\begin{eqnarray*}
\mathbf{B}(U,U) &=&-(U_{\bot }\cdot \nabla _{\bot })U+\nabla _{\bot }q, \\
\nabla _{\bot }\cdot (\mathbf{B}(U,U))_{\bot } &=&0,\text{   }(\mathbf{B}(U,U))_{x}=0,\text{   }x=\pm1/2.
\end{eqnarray*}
Notice that the quadratic operator $\mathbf{B}$ is symmetrized if applied to two different vector functions:
\[
2\mathbf{B}(U,V))=:1/2\{\mathbf{B}(U+V,U+V)-\mathbf{B}(U-V,U-V)\}.
\]
Now our system reads as%
\begin{equation}
\partial_t U=\mathbf{L}_{0}U+\tau \mathbf{L}_{1}U+\mathbf{B}%
(U,U),  \label{NSequ}
\end{equation}%
in a space of divergence free vector fields, satisfying the boundary
conditions, and where 
\[
\tau =T-T_{c}.
\]%
For obtaining the Landau equation describing the first bifurcation occuring
for $\tau $ close to $0$, we use the fact that $\mathbf{L}_{0}$ has a double $%
0$ eigenvalue with eigenvectors%
\[
\zeta =e^{i\alpha _{c}z}u^0(x),\overline{\zeta }=e^{-i\alpha _{c}z}%
\overline{u^0}(x),
\]%
(see subsection \ref{explambda}) and the fact that the rest of the spectrum of the self adjoint operator $%
\mathbf{L}_{0}+\tau \mathbf{L}_{1}$ is only composed of \thinspace $<0$
isolated eigenvalues, not close to 0. Moreover the center manifold reduction
applies (see \cite{Haragus}) with a $O(2)$ symmetry, and the
dynamics near $0$ reduces to the 2-dimensional differential equation in $%
\mathbb{C}$, at main orders
\begin{equation}
\frac{dA}{dt}=a\tau A-cA|A|^{2},  \label{Landauequ}
\end{equation}%
where 
\begin{eqnarray}
U &=&A(t)\zeta +\overline{A}(t)\overline{\zeta }+\Phi (A,\overline{A},\tau ),
\label{centermanif} \\
\Phi (A,\overline{A},\tau ) &=&\tau A\Phi _{10}^{(1)}+A^{2}\Phi
_{20}+|A|^{2}\Phi _{11}+\overline{A}^{2}\overline{\Phi _{20}}+...  \nonumber
\end{eqnarray}%
coefficients $\Phi _{ij}$ being vector functions of $x,z$ satisfying the
boundary conditions. The coefficients $a$, $c$ and $\Phi _{ij}$ may be
computed as indicated below. Replacing $U$ by (\ref{centermanif}) in (\ref%
{NSequ}) and using (\ref{Landauequ}), gives after identification of each
monomial $\tau ^{p}A^{n}\overline{A}^{m}$%
\begin{eqnarray}
a\zeta  &=&\mathbf{L}_{0}\Phi _{10}^{(1)}+\mathbf{L}_{1}\zeta 
\label{coef a} \\
0 &=&\mathbf{L}_{0}\Phi _{20}+\mathbf{B}(\zeta ,\zeta ),  \nonumber \\
0 &=&\mathbf{L}_{0}\Phi _{11}+2\mathbf{B}(\zeta ,\overline{\zeta }) , 
\nonumber \\
-c\zeta  &=&\mathbf{L}_{0}\Phi _{21}+ 2\mathbf{B}(\zeta ,\Phi _{11})+2\mathbf{B}(\overline{\zeta },\Phi _{20}).  \label{coef c}
\end{eqnarray}%
Since we have%
\[
\mathbf{L}_{0}\zeta =0,
\]%
with $\mathbf{L}_{0}$ selfadjoint, we obtain%
\[
a\langle \zeta ,\zeta \rangle =\langle \mathbf{L}_{1}\zeta ,\zeta \rangle ,
\]%
\begin{equation}
-c\langle \zeta ,\zeta \rangle =\langle 2\mathbf{B}(\zeta ,\Phi _{11})+2\mathbf{B}(\overline{\zeta },\Phi _{20}),\zeta \rangle ,  \label{explicit c}
\end{equation}%
where the scalar product is defined by%
\[
\langle U,V\rangle =\int_{0}^{2\pi /\alpha_c }\int_{-1/2}^{1/2}(u_{x}\overline{%
v_{x}}+T_{c}u_{y}\overline{v_{y}}+u_{z}\overline{v_{z}}) \, dxdz.
\]%
Now we show that $a$ is just the coefficient $a_{3}$ computed before, and we
prove below that $c>0,$ without an explicit computation. Indeed we have%
\begin{eqnarray*}
\zeta  &=&e^{i\alpha_c z}\left( 
\begin{array}{c}
u_{x}^{0}(x) \\ 
u_{y}^{0}(x) \\ 
u_{z}^{0}(x)%
\end{array}%
\right) , \\
Du_{x}^{0}+i\alpha_c u_{z}^{0} &=&0,
\end{eqnarray*}%
\begin{eqnarray*}
(\alpha_c ^{2}-D^{2})^{2}u_{x}^{0} &=&T_{c}\alpha_c ^{2}u_{y}^{0}, \\
(\alpha_c ^{2}-D^{2})u_{y}^{0} &=&u_{x}^{0}, \\
u_{x}^{0} &=&Du_{x}^{0}=u_{y}^{0}=0,x=\pm 1/2,
\end{eqnarray*}%
hence%
\[
a\int_{-1/2}^{1/2}
\Bigl( |u_{x}^{0}|^{2}+T_{c}|u_{y}^{0}|^{2}+|u_{z}^{0}|^{2} \Bigr) \, dx
=\int_{-1/2}^{1/2}u_{y}^{0}u_{x}^{0}dx,
\]%
then noticing that $(u_{x}^{0},u_{y}^{0})$ is real and $u_{z}^{0}=(i/\alpha_c
)Du_{x}^{0}$, this is exactly the coefficient $a_{3}>0$ already computed at Lemma \ref{lem4.2}.

Now, for the computation of $c$ we notice that for any \emph{real} vector
field $U$ we have%
\begin{equation}
\langle \mathbf{B}(U,U),U\rangle =0.\label{remarkident}
\end{equation}
Indeed 
\begin{eqnarray*}
-\langle \mathbf{B}(U,U),U\rangle  &=&\langle (U_{\bot }\cdot \nabla _{\bot
})U,U\rangle  \\
&=&\int \Bigl[\nabla _{\bot }\cdot (U_{\bot }\frac{U_{\bot }^{2}}{2}%
)+T_{c}\nabla _{\bot }\cdot (U_{\bot }\frac{U_{y}^{2}}{2}) \Bigr] \, dxdz=0
\end{eqnarray*}%
because of periodicity and boundary conditions. It then results that%
\begin{equation}
2\langle \mathbf{B}(U_{1},U_{2}),U_{1}\rangle +\langle \mathbf{B}(U_{1},U_{1}),U_{2}\rangle =0,
\label{cubic ident}
\end{equation}%
for any real divergence free vector fields $U_{1},U_{2}$ satisfying the
boundary conditions (just take in (\ref{remarkident}) $U=U_{1}+tU_{2}$ and find the coefficient of $%
t$). Coming back to (\ref{explicit c}) we may observe that%
\[
-c\langle \zeta +\overline{\zeta },\zeta +\overline{\zeta }\rangle =\langle 2%
\mathbf{B}(\zeta +\overline{\zeta },\Phi _{2}),\zeta +\overline{\zeta }%
\rangle ,
\]%
where $\Phi _{2}$ is defined by%
\[
\mathbf{L}_{0}\Phi _{2}+\mathbf{B}(\zeta +\overline{\zeta },\zeta +\overline{%
\zeta })=0.
\]%
Then, from (\ref{cubic ident}) we obtain%
\begin{eqnarray*}
c\langle \zeta +\overline{\zeta },\zeta +\overline{\zeta }\rangle 
&=&\langle \mathbf{B}(\zeta +\overline{\zeta },\zeta +\overline{\zeta }%
),\Phi _{2}\rangle  \\
&=&-\langle \mathbf{L}_{0}\Phi _{2},\Phi _{2}\rangle .
\end{eqnarray*}%
Observing that $\Phi _{2}$ is orthogonal to the kernel of $\mathbf{L}_{0},$
where $0$ is the largest eigenvalue of the selfadjoint operator, we claim
that $\langle \mathbf{L}_{0}\Phi _{2},\Phi _{2}\rangle <0.$ Hence $c>0$ (we
may mention that this type of proof is due to V.Yudovich \cite{Yudo3} for the B\'{e}%
nard-Rayleigh convection problem).

For an explicit calculation of $c$ we need to compute $\Phi _{11}$ and $\Phi
_{20}.$ Let us define%
\[
\Phi _{11}=(u_{x}^{11},u_{y}^{11},u_{z}^{11})^{t},
\]%
and%
\[
(\zeta _{\bot }\cdot \nabla _{\bot })\overline{\zeta }+(\overline{\zeta }%
_{\bot }\cdot \nabla _{\bot })\zeta =\left( 
\begin{array}{c}
 2D(u_{x}^{0})^{2} \\ 
2D(u_{x}^{0}u_{y}^{0}) \\ 
0%
\end{array}%
\right) ,
\]%
then we need to solve%
\begin{eqnarray*}
D^{2}u_{x}^{11}-Dp+T_{c}u_{y}^{11} &=& 2D(u_{x}^{0})^{2}, \\
D^{2}u_{y}^{11}+u_{x}^{11} &=&2D(u_{x}^{0}u_{y}^{0}), \\
D^{2}u_{z}^{11} &=&0, \\
Du_{x}^{11} &=&0
\end{eqnarray*}%
with boundary conditions. Finally%
\[
\Phi _{11}=(0,u_{y}^{11},0)^{t},
\]%
with%
\[
D^{2}u_{y}^{11}=2D(u_{x}^{0}u_{y}^{0}),u_{y}^{11}(\pm 1/2)=0,
\]%
hence $u_{y}^{11}$ is odd in $x,$ and%
\begin{equation}
u_{y}^{11}(x)=2\int_{0}^{x}u_{x}^{0}(s)u_{y}^{0}(s)ds-4x%
\int_{0}^{1/2}u_{x}^{0}(s)u_{y}^{0}(s)ds.  \label{Phi11}
\end{equation}%
Let us now define%
\[
\Phi _{20}=e^{2i\alpha_c z}(u_{x}^{20},u_{y}^{20},u_{z}^{20})^{t}
\]%
and compute%
\[
(\zeta _{\bot }\cdot \nabla _{\bot })\zeta =e^{2i\alpha_c z}\left( 
\begin{array}{c}
0 \\ 
u_{x}^{0}Du_{y}^{0}-u_{y}^{0}Du_{x}^{0} \\ 
\frac{i}{\alpha_c }u_{x}^{0}D^{2}u_{x}^{0}-\frac{i}{\alpha_c }(Du_{x}^{0})^{2}%
\end{array}%
\right) ,
\]%
then we need to solve%
\begin{eqnarray*}
(D^{2}-4\alpha_c ^{2})u_{x}^{20}-Dp+T_{c}u_{y}^{20} &=&0, \\
(D^{2}-4\alpha_c ^{2})u_{y}^{20}+u_{x}^{20}
&=&u_{x}^{0}Du_{y}^{0}-u_{y}^{0}Du_{x}^{0}, \\
(D^{2}-4\alpha_c ^{2})u_{z}^{20}-2i\alpha_c p &=&\frac{i}{\alpha_c }%
[u_{x}^{0}D^{2}u_{x}^{0}-(Du_{x}^{0})^{2}], \\
Du_{x}^{20}+2i\alpha_c u_{z}^{20} &=&0,
\end{eqnarray*}%
\[
u_{x}^{20}=Du_{x}^{20}=u_{y}^{20}=0,x=\pm 1/2.
\]%
This leads to the 6th order system%
\begin{eqnarray*}
(D^{2}-4\alpha_c ^{2})^{2}u_{x}^{20}-4\alpha_c ^{2}T_{c}u_{y}^{20}
&=&2D[u_{x}^{0}D^{2}u_{x}^{0}-(Du_{x}^{0})^{2}], \\
(D^{2}-4\alpha_c ^{2})u_{y}^{20}+u_{x}^{20}
&=&u_{x}^{0}Du_{y}^{0}-u_{y}^{0}Du_{x}^{0}, \\
u_{x}^{20} &=&Du_{x}^{20}=u_{y}^{20}=0,x=\pm 1/2.
\end{eqnarray*}%
This gives $u_{x}^{20}$ and $u_{y}^{20}$ real and odd in $x,$ while $%
u_{z}^{20}$ is pure imaginary and even in $x.$ Now we can compute%
\[
2\mathbf{B}(\zeta ,\Phi _{11})=-e^{i\alpha_c z}\left( 
\begin{array}{c}
0 \\ 
u_{x}^{0}Du_{y}^{11} \\ 
0%
\end{array}%
\right) ,
\]%
which satisfies the divergence free (in $(x,z)$) and boundary conditions. We also obtain
\begin{align*}
&2\mathbf{B}(\overline{\zeta },\Phi _{20})\\
&=-\Pi e^{i\alpha_c z}\left( 
\begin{array}{c}
D(u_{x}^{0}u_{x}^{20})+2u_{x}^{20}Du_{x}^{0}+\frac{1}{2}u_{x}^{0}Du_{x}^{20}
\\ 
u_{x}^{0}Du_{y}^{20}+2u_{y}^{20}Du_{x}^{0}+u_{x}^{20}Du_{y}^{0}+\frac{1}{2}%
u_{y}^{0}Du_{x}^{20} \\ 
\frac{i}{2\alpha_c }[D(u_{x}^{0}Du_{x}^{20})-2u_{x}^{20}D^{2}u_{x}^{0}]%
\end{array}%
\right).
\end{align*}
Hence, from (\ref{explicit c}), we finally obtain the positive number $c$ which is given by
\begin{eqnarray*}
&&c\int_{-1/2}^{1/2} \Bigl(|u_{x}^{0}|^{2}+T_{c}|u_{y}^{0}|^{2}+\frac{1}{\alpha_c
^{2}}|Du_{x}^{0}|^{2} \Bigr)  \, dz \, dx 
\\
&=&%
\int_{-1/2}^{1/2}T_{c}u_{y}^{0}
\Bigl[ u_{x}^{0}Du_{y}^{11}+u_{x}^{0}Du_{y}^{20}+2u_{y}^{20}Du_{x}^{0}+u_{x}^{20}Du_{y}^{0}+%
\frac{1}{2}u_{y}^{0}Du_{x}^{20} \Bigr] \, dx \\
&&+\int_{-1/2}^{1/2}u_{x}^{0} \Bigl[D(u_{x}^{0}u_{x}^{20})+2u_{x}^{20}Du_{x}^{0}+%
\frac{1}{2}u_{x}^{0}Du_{x}^{20} \Bigr] \, dx \\
&&+\int_{-1/2}^{1/2}\frac{1}{2\alpha_c ^{2}}%
Du_{x}^{0} \Bigl[ D(u_{x}^{0}Du_{x}^{20})-2u_{x}^{20}D^{2}u_{x}^{0} \Bigr] \, dx
\end{eqnarray*}
since the projection $\Pi $ disappears in the scalar product.


\subsection{Computation of the Ginzburg-Landau equation}\label{compGLequ}


In this subsection we derive the partial differential equation satisfied by
the amplitude $A(y,t),$ analogue to the usually called Ginzburg-Landau
equation.

The perturbation $U$ of the Couette flow is decomposed as%
\begin{equation}
U=A\zeta +\overline{A}\overline{\zeta }+\Phi (\tau ,A,\overline{A},\partial
_{y})  \label{decomp U}
\end{equation}%
where%
\[
A=A(y,t),\zeta =e^{i\alpha _{c}z}u^0(x), 
\]%
and where we consider $y$ as a "slow variable" in the amplitude $A.$ Hence $%
\partial _{y}$ is considered as a small parameter, and $A,\overline{A}%
,\partial _{y}A,\partial _{y}\overline{A},\partial _{y}^{2}A,\partial
_{y}^{2}\overline{A}...$ are considered to be independent in the expansions (see \cite{Iooss} and \cite{Iooss2} for the justification of such a computation in the search of steady solutions for Navier-Stokes equations). 
$\Phi $ replaces the usual expression of the center manifold, which was used
before for computing the coefficients in the Landau equation (see (\ref{coef a}),
(\ref{coef c})). The system with which we start is (\ref{newnonlinear}) where the hats are suppressed:%
\begin{equation}
\partial_t U=\mathbf{L}_{0}U+\tau \mathbf{L}_{1}U+\mathbf{B}%
(U,U),  \label{basic NS syst}
\end{equation}%
\[
\mathbf{L}_{0}U=\Delta_{\bot} U+\nabla_{\bot} p_{0}+\mathfrak{R}x\frac{\partial U}{%
\partial y}+\left( 
\begin{array}{c}
T_{c}u_{y} \\ 
u_{x} \\ 
0%
\end{array}%
\right) , 
\]%
\[
\mathbf{L}_{1}U=\left( 
\begin{array}{c}
u_{y} \\ 
0 \\ 
0%
\end{array}%
\right) +\nabla p_{1}, 
\]%
\[
\mathbf{B}(U,U)=-(U_{\bot}\cdot \nabla_{\bot} )U+\nabla p_{2}, 
\]%
where%
\begin{eqnarray*}
\nabla _{\bot}\cdot U_{\bot}+\mathfrak{R}\partial_yu_y =0,&&\text{   }U|_{x=\pm 1/2}=0, \\
\tau &=&2\mathfrak{R}(\omega -\omega _{c})=T-T_{c},
\end{eqnarray*}%
the terms $\nabla p_{0},\nabla p_{1},\nabla p_{2}$ are such that the result
(the left hand side) is divergence free and satisfies the boundary condition
required for the projection $\Pi .$ By construction, we have for the
eigenvector $\zeta :$%
\[
\mathbf{L}_{0}\zeta =0,
\]%
with%
\[
u^0(x)=(u_{x}^{0},u_{y}^{0},u_{z}^{0}), 
\]%
where $u_{x}^{0},u_{y}^{0}$ are real and even in $x,$ while $
u_{z}^{0}$ is pure imaginary and odd in $x.$

The partial differential system ruling $A(y,t)$ and $\overline{A}(y,t)$ has
to commute with the symmetries commuting with the system (\ref{basic NS syst}%
). There are four important such symmetries:

\begin{itemize}

\item[i)] translation $z\mapsto z+h,$ represented by the operator $T_{h},$

\item[ii)] symmetry through horizontal plane: $z\mapsto -z$ represented by the
symmetry $S_{0},$

\item[iii)] azimuthal rotation: $y\mapsto y+b$ represented by the operator $R_{b},$

\item[iv)] symmetry with respect to $z$ axis: $(x,y)\mapsto (-x,-y)$ represented by
the symmetry $S_{1}.$ It should be noted that the symmetry $S_{1}$ is new
with respect to the usual Couette-Taylor system. This is due to our
asymptotic geometric situation and to the fact that we sit in a suitable rotating frame.

\end{itemize}

Then, we have 
\begin{eqnarray*}
T_{h}U(x,y,z) &=&U(x,y,z+h), \\
S_{0}U(x,y,z) &=&(u_{x},u_{y},-u_{z})(x,y,-z), \\
R_{b}U(x,y,z) &=&U(x,y+b,z), \\
S_{1}U(x,y,z) &=&(-u_{x},-u_{y},u_{z})(-x,-y,z),
\end{eqnarray*}%
which corresponds to require the commutation of the amplitude system
respectively with the following actions:%
\begin{eqnarray*}
A &\mapsto &Ae^{i\alpha h}, \\
A &\mapsto &\overline{A}, \\
A(y) &\mapsto &A(y+b), \\
A(y) &\mapsto &-A(-y).
\end{eqnarray*}%
This explains why the principal terms in the PDE system are as%
\begin{equation}
\frac{\partial A}{\partial t}=a\tau A-b\frac{\partial ^{2}A}{\partial y^{2}}%
-cA|A|^{2}+h.o.t.  \label{GL equaton}
\end{equation}%
where coefficients are real. Higher order terms would be in $\tau ^{2}A,$ $%
\frac{\partial ^{4}A}{\partial y^{4}},$ $\tau \frac{\partial ^{2}A}{\partial
y^{2}},$ $A|A|^{4},$ $\tau A|A|^{2},$ $|A|^{2}\frac{\partial ^{2}A}{\partial
y^{2}},...$ Below we explain how to compute coefficients containing $y$
derivatives in (\ref{GL equaton})$)$, here only coefficient $b$ since we
already gave the method for computing the coefficients with no $y$
derivative. It should be noticed that if we consider the linearized system
near the Couette flow, we need to recover the most at right eigenvalues $%
\lambda $ and $\overline{\lambda }$ computed before where we assume a
dependency in $e^{i\beta y}$ for $A.$ We then obtain%
\[
\lambda =a\tau -b\beta ^{2}+h.o.t.
\]%
and $\overline{\lambda }.$ This is exactly what we already obtained at Lemma \ref{lem4.2}, 
with the explicit dependency of coefficient $b$ on the Reynolds number (%
$a=a_{3},b=b_{4}\mathfrak{R}^{2}).$

For the computation let us define the coefficients as%
\[
\Phi =\sum_{m\geq 0,|p|+|q|\geq 1}\Phi _{pq}^{(m)}(x,z)\tau ^{m}A^{p_{0}}%
\overline{A}^{q_{0}}(\partial _{y}A)^{p_{1}}(\partial _{y}\overline{A}%
)^{q_{1}}(\partial _{y}^{2}A)^{p_{2}}(\partial _{y}^{2}\overline{A}%
)^{q_{2}}..
\]%
where%
\begin{eqnarray*}
p &=&(p_{0},p_{1},p_{2},...) \\
q &=&(q_{0},q_{1},q_{2},...)
\end{eqnarray*}%
are multi-indices, and%
\[
\Phi _{pq}^{(m)}=\overline{\Phi _{qp}^{(m)}}
\]%
since $U$ and $\Phi $ are real vector fields, and 
\[
\Phi _{10}^{(0)}=\Phi _{01}^{(0)}=0
\]%
since the corresponding linear terms are already taken into account in (\ref%
{decomp U}). The method consists in identifying monomials when we write (\ref%
{basic NS syst}) using the expression (\ref{decomp U}) and replace $\frac{%
\partial A}{\partial t}$ by (\ref{GL equaton}). For the derivation we need
to rewrite the operator $\mathbf{L}_{0}$ as follows%
\begin{eqnarray*}
\mathbf{L}_{0}U &=&\Delta _{\bot }U+\nabla _{\bot }p+%
\mathfrak{R}x\partial _{y}U +\left( 
\begin{array}{c}
T_{c}u_{y} \\ 
u_{x} \\ 
0%
\end{array}%
\right) , \\
0 &=&\nabla _{\bot }\cdot U_{\bot }+\mathfrak{R}\partial _{y}u_{y},
\end{eqnarray*}%
where the subscript $\bot $ means components $(x,z).$ Then, using $\mathbf{L}%
_{0}\zeta =0,$ $\mathbf{L}_{0}(A\zeta )$ becomes%
\[
\mathbf{L}_{0}(A\zeta )=\mathfrak{R}x(\partial
_{y}A)\zeta +\nabla _{\bot }p.
\]

Now, as an example, let us consider the coefficients $\Phi
_{(01)(00)}^{(0)}(x,z)$ of $\partial _{y}A$ and $\Phi
_{(002)(000)}^{(0)}(x,z)$ of $\partial _{y}^{2}A$ in $\Phi .$ Then
identifying the coefficients of $\partial _{y}A$ and defining 
\[
p=Ap_{0}+\overline{A}\overline{p_{0}}+(\partial _{y}A)p_{(01)(00)}+...
\]%
where $~(p_{0},\zeta )$ satisfy%
\begin{eqnarray*}
\Delta _{\bot }\zeta +\nabla _{\bot }p_{0}+e^{i\alpha _{c}z}\left( 
\begin{array}{c}
T _{c}u_{y}^{0} \\ 
u_{x}^{0} \\ 
0
\end{array}%
\right)  &=&0 \\
\nabla _{\bot }\cdot \zeta _{\bot } &=&0,
\end{eqnarray*}%
we obtain%
\begin{eqnarray}
0 &=&\Delta _{\bot }\Phi _{(01)(00)}+\left( 
\begin{array}{c}
T_{c}u_{y(01)(00)} \\ 
u_{x(01)(00)} \\ 
0%
\end{array}%
\right) +\mathfrak{R}x\zeta +\nabla _{\bot }p_{(01)(00)} ,  \label{Phi01} \\
0 &=&\nabla _{\bot }\cdot \Phi _{\bot (01)(00)}+ \mathfrak{R}u_{y}^{0},\text{     }\Phi
_{(01)(00)}|_{x=\pm 1/2}=0.  \nonumber
\end{eqnarray}%
For obtaining the compatibility condition we take the scalar product of (\ref{Phi01}) with $e^{i\alpha _{c}z}(u_{x}^{0},T_{c}u_{y}^{0},u_{z}^{0})^{t}.$ We observe that $\Phi _{(01)(00)}$
is uniquely determined in assuming its orthogonality with $\zeta ,$ where
the scalar product $\langle U,V\rangle $ is understood as%
\[
\langle U,V\rangle =\langle u_{x},v_{x}\rangle +\langle T _{c}u_{y},v_{y}\rangle +\langle u_{z},v_{z}\rangle .
\]%
We may observe that the coefficient of $e^{i\alpha_{c} z}$ for $u_{x(01)(00)}$ and $u_{y(01)(00)}$ are real and odd in $%
x,$ while for $u_{z(01)(00)}$ it is pure imaginary and even in $x$ and for $p_{(01)(00)}
$ it is real and even in $x.$ 

Now the identification of the coefficients of $\partial _{y}^{2}A$ leads to%
\begin{equation}
    \begin{aligned}
        -b\zeta  =&\Delta _{\bot }\Phi _{(001)(000)}+\left( 
\begin{array}{c}
T_{c}u_{y(001)(000)} \\ 
u_{x(001)(000)} \\ 
0%
\end{array}%
\right) +\mathfrak{R}x\Phi _{(01)(00)}
\\
 &+\nabla _{\bot }p_{(001)(000)}
 ,  \label{calculof b}
    \end{aligned}
\end{equation}
$$
0 =\nabla _{\bot }\cdot \Phi _{\bot (001)(000)}+\mathfrak{R}u_{y(01)(00)},\text{   }\Phi
_{(001)(000)}|_{x=\pm 1/2}. 
$$
As above we take the scalar product of (\ref{calculof b}) with $e^{i\alpha
_{z}z}(u_{x}^{0},T_{c}u_{y}^{0},u_{z}^{0})^{t},$ then we obtain $b$ in terms of $\Phi
_{(01)(00)}$ given by (\ref{Phi01}). An exercise left to the reader, consists in
verifying that we indeed obtain 
\[
b=\mathfrak{R}^{2}b_{4}.
\]


\section{Appendix B: Numerical study \label{Appendix2} }



\subsection{The axisymmetric case \label{Appendix21}}


This paragraph is devoted to the numerical study of the functions $\mathfrak{F}_1$ and $\mathfrak{F}_2$.
The figure \ref{axi} shows the value of the smallest positive zero $T_c(\alpha)$ of $\mathfrak{F}_1(\alpha,T) = 0$ as
a function of $\alpha$. For positive $\alpha$, we see that the function $T(\alpha)$ is convex and has a unique minimum
at $\alpha_c \approx 3.117$.

The zeros of $\mathfrak{F}_2(\alpha,T) = 0$ are always larger than the zeros of $\mathfrak{F}_1(\alpha,T) = 0$,
$\alpha$ being fixed.

\begin{figure}[th]
\begin{center}
\includegraphics[width=10cm]{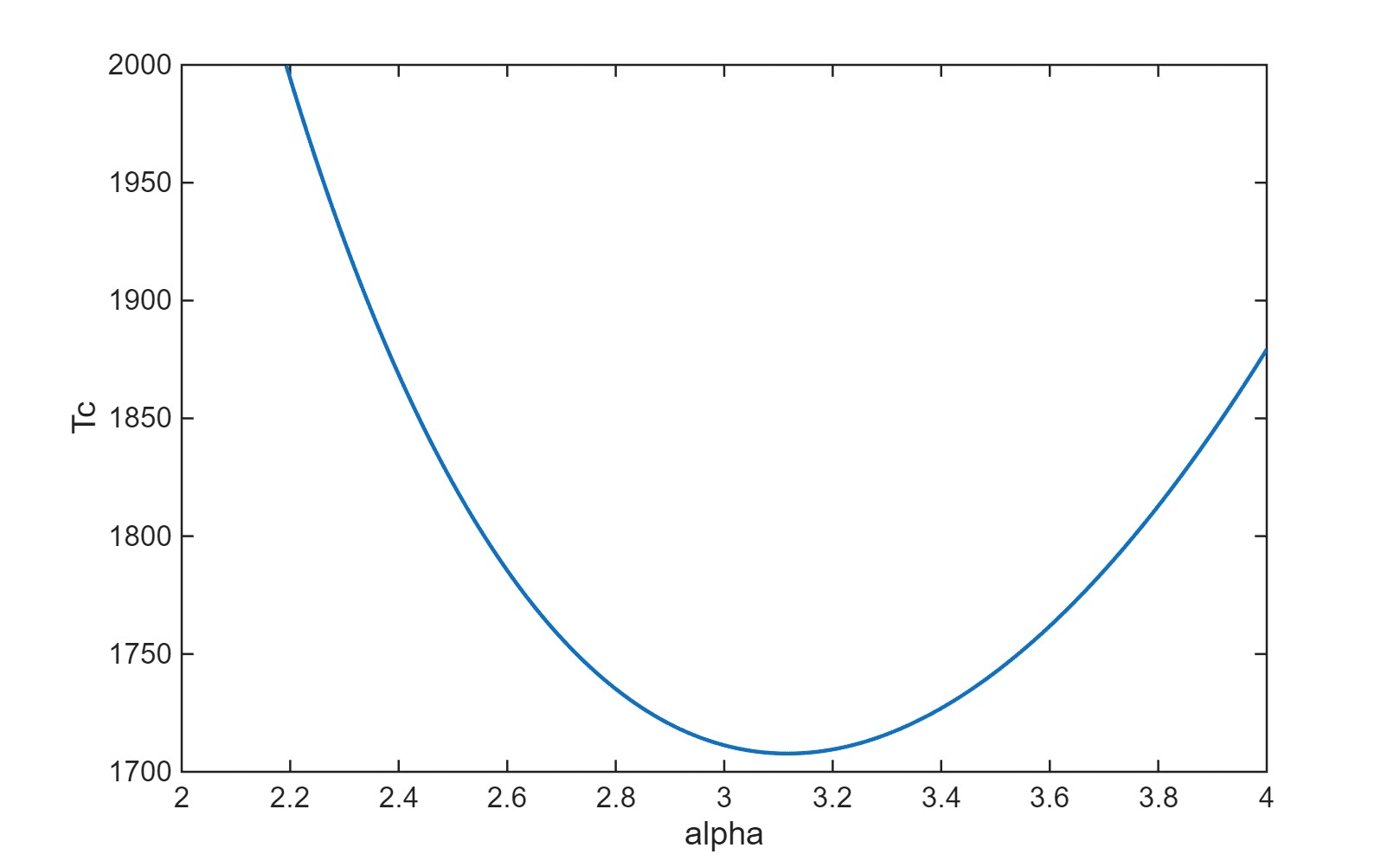}
\end{center}
\caption{Value of the smallest positive zero of $T \mapsto \mathfrak{F}_1(\alpha,T)$ as a function of $\alpha$}
\label{axi}
\end{figure}

\subsection{The non axisymmetric case \label{Appendix22}}


In this section, we discuss the numerical study of the following system
\begin{eqnarray*}
(\lambda +\alpha ^{2}-D^{2})(\alpha ^{2}-D^{2})u_{x}+i\mathfrak{B}%
x(D^{2}-\alpha ^{2})u_{x} &=&T\alpha ^{2}{u}_{y} , \\
(\lambda +\alpha ^{2}-D^{2}) u_{y}-i\mathfrak{B}x u_{y} &=&u_{x},
\end{eqnarray*}%
where the parameters are $\alpha ^{2},T,\mathfrak{B}$, and where the boundary
conditions are%
\[
u_{x}= u_{y}=Du_{x}=0,x=\pm 1/2.
\]
First, we rewrite this system as a first order system on 
$$
U = ( u_x, D u_x, D^2 u_x, D^3 u_x, u_y, D u_y)
$$
which leads to 
\begin{align*}
D^4 u_x &= 
- (\lambda +\alpha ^{2}-i \mathfrak{B} x)(\alpha ^{2}-D^{2})u_{x}
+ D^2 \alpha^2 u_x + T\alpha ^{2} u_{y}, 
\\
D^2 u_y &= (\lambda +\alpha ^{2}- i \mathfrak{B} x)u_{y} - u_x,
\end{align*}
a system that we shorten in
$$
D U = A U
$$
where $A$ is the corresponding $6 \times 6$ matrix.

We consider this equation as a Cauchy problem starting at $x = -1/2$. We construct three independent
solutions $U^1$, $U^2$ and $U^3$ defined by
$$
U^1(-1/2) = (0,0,1,0,0,0), \quad
U^2(-1/2) = (0,0,0,1,0,0)
$$
and
$$
U^3(-1/2) = (0,0,0,0,0,1).
$$
Any solution to our problem is a linear combination of $u^1$, $u^2$ and $u^3$ which satisfied
$$
\delta(\lambda,\alpha,B,T) := \left| \begin{array}{ccc}
u_x^1(1/2) & u_x^2(1/2) & u_x^3(1/2) \cr
Du_x^1(1/2) & Du_x^2(1/2) & Du_x^3(1/2) \cr
u_y^1(1/2) & u_y^2(1/2) & u_y^3(1/2) \cr
\end{array} \right| = 0.
$$
It is very easy to approximate numerically $u^1$, $u^2$ and $u^3$ and to compute $\delta$.

\begin{figure}[th]
\begin{center}
\includegraphics[width=10cm]{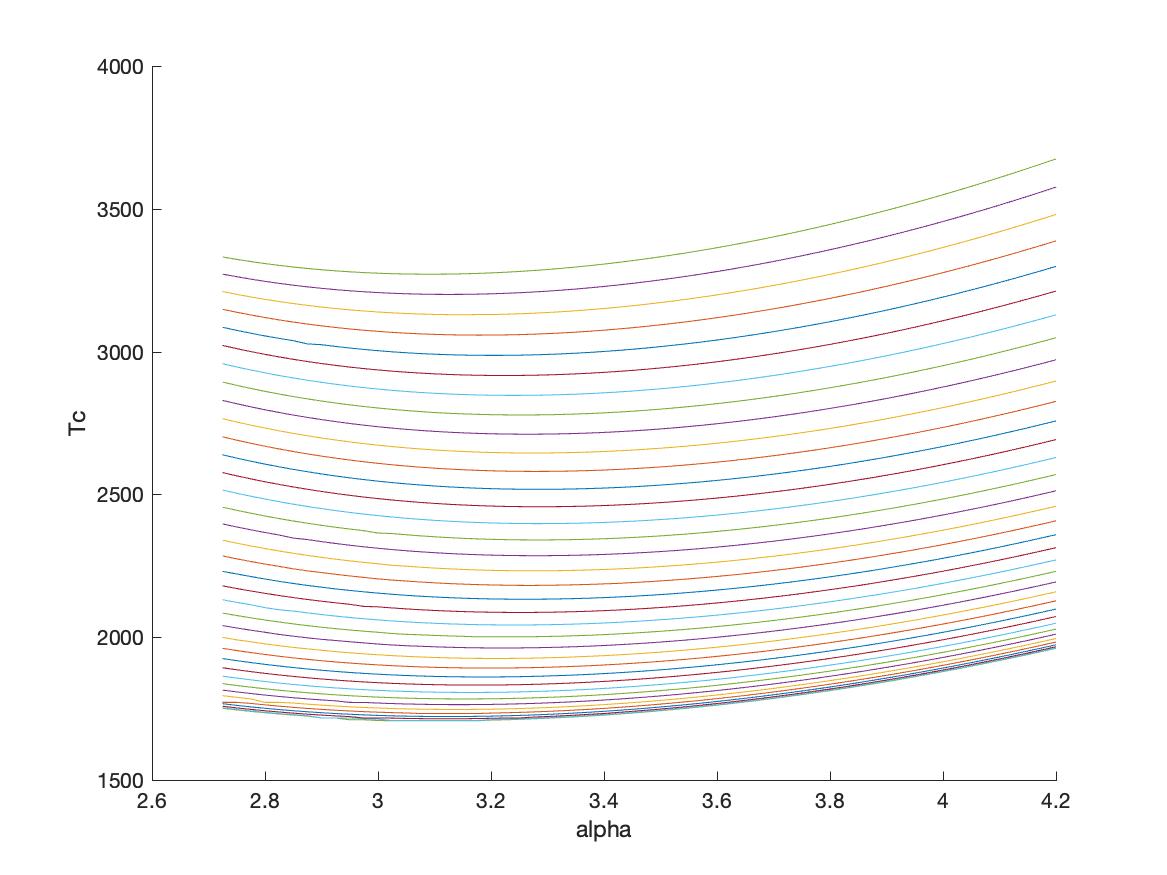}
\end{center}
\caption{The critical Taylor number $T_c(\alpha,\beta)$ as a function of $\alpha$ for various values of 
$\mathfrak{B}$.
}
\end{figure}

\begin{figure}[th]
\begin{center}
\includegraphics[width=10cm]{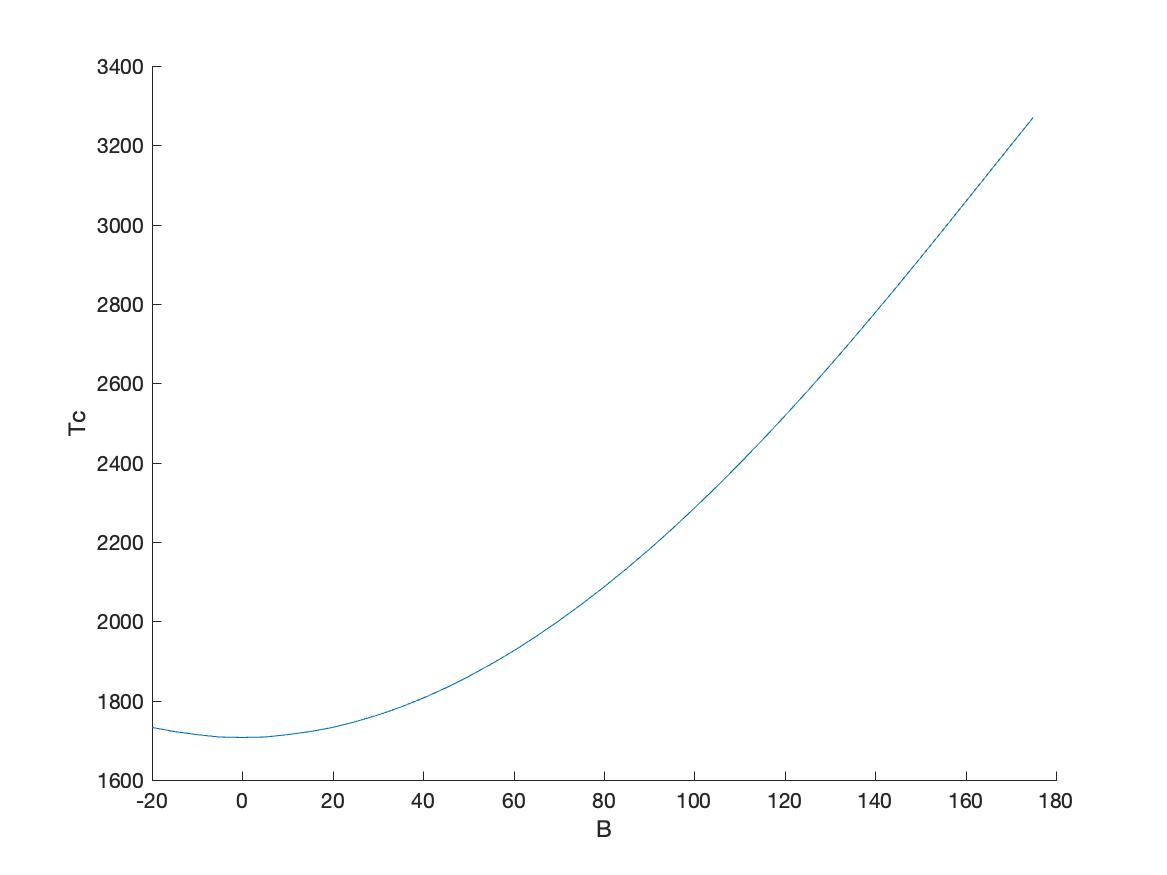}
\end{center}
\caption{The critical Taylor number $T_c$ as a function of $\mathfrak{B}$ }
\end{figure}

\begin{figure}[th]
\begin{center}
\includegraphics[width=10cm]{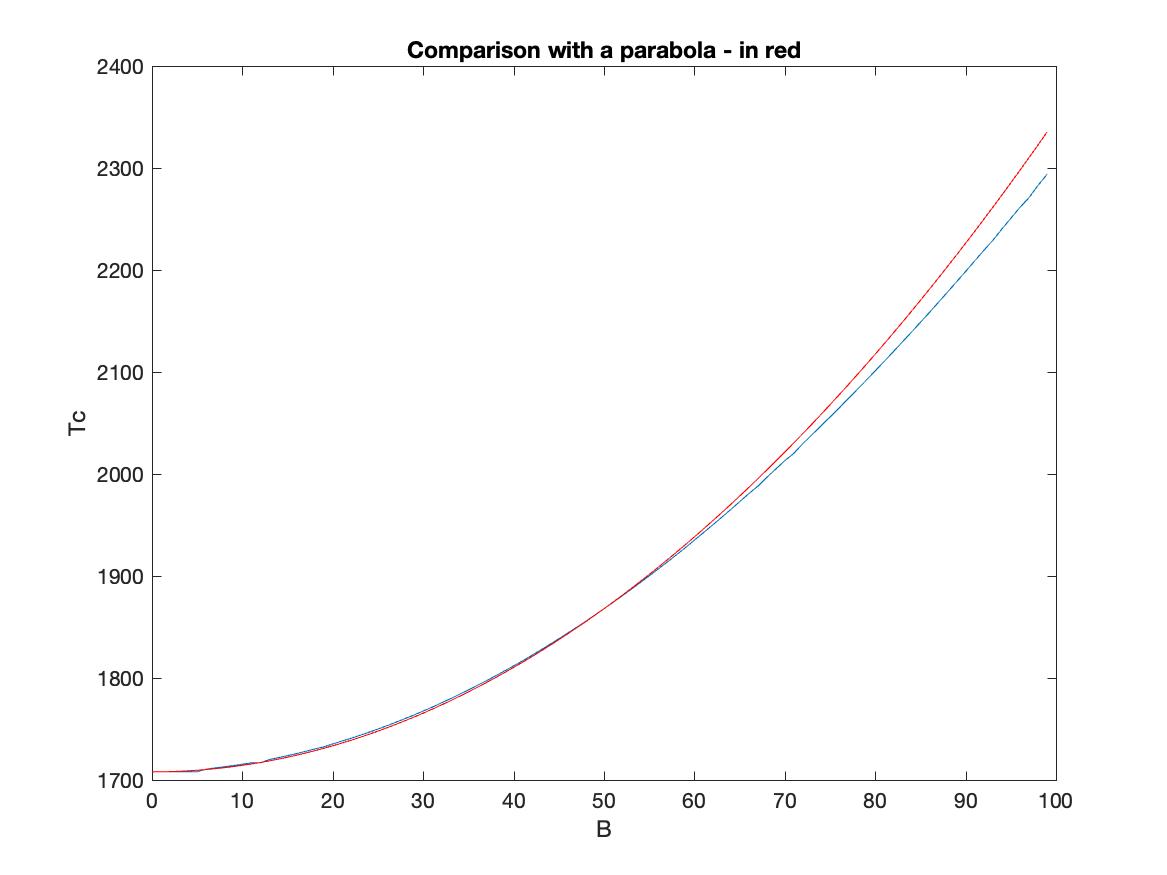}
\end{center}
\caption{The critical Taylor number $T_c(\alpha_c,\mathfrak{B})$ as a function of $\mathfrak{B}$ (blue curve) and its quadratic approximation by 
$T_c + 0.064 \mathfrak{B}^2$.}
\label{comparison}
\end{figure}

The first step is to solve $\delta(T) = 0$, $\alpha$ and $\mathfrak{B}$ being given.
This gives the critical Taylor number $T_c(\alpha,\mathfrak{B})$.
The second step is to compute the minimum $T_c(\mathfrak{B})$ of $T_c(\alpha,\mathfrak{B})$ over $\alpha$.

We note that $T_c(\mathfrak{B})$ is minimum when $\mathfrak{B} = 0$, where it equals the classical value of $1707$
for $\alpha \approx 3.117$.
Moreover, on the range of $\alpha$ which is studied on this figure, $T_c$ is a convex function of $\alpha$ when
$\mathfrak{B}$ is fixed.

Figure \ref{comparison} displays $T_c(\alpha_c,\mathfrak{B})$ as a function of $\mathfrak{B}$, together with its quadratic approximation at $\mathfrak{B} = 0$. This in particular shows, numerically, that $a_4 < 0$.


\subsubsection*{Acknowledgments}  

The authors would like to warmly thank Y. Guo for many fruitful discussions.
D. Bian is supported by NSFC under the contract 12271032.

\subsubsection*{Conflict of interest}  The authors state that there is no conflict of interest.

\subsubsection*{Data availability}  Data are not involved in this research paper.


\clearpage

\end{document}